\documentclass{article}
\pdfoutput=1

\usepackage[utf8]{inputenc}
\usepackage[a4paper,left=3cm,right=3cm,bottom=3.7cm,top=3.5cm]{geometry}
\usepackage{amsmath}
\usepackage{amssymb}
\usepackage{amsthm}
\usepackage{bm}
\usepackage{mathtools}
\usepackage{microtype}
\usepackage{xcolor}
\usepackage[normalem]{ulem} 
\usepackage[hidelinks]{hyperref}
\usepackage{graphicx}
\usepackage{subcaption}
\usepackage{booktabs}
\usepackage{xparse}
\usepackage[ruled,vlined]{algorithm2e}
\SetKwInput{KwInput}{Input}
\SetKwInput{KwOutput}{Output}
\SetKwInput{KwParameters}{Parameters}
\usepackage{pgfplots}
\pgfplotsset{compat=1.16}
\usepackage{siunitx}
\usepackage{placeins}
\usepackage{ulem}

\theoremstyle{plain}
\newtheorem{theorem}{Theorem}[section]
\newtheorem{lemma}[theorem]{Lemma}

\newtheorem{definition}{Definition}

\DeclareMathOperator{\ortho}{ortho}

\newcommand{\abs}[1]{\left\lvert #1 \right\rvert}

\title{\Large A stable rank-adaptive step-and-truncate finite volume method for Vlasov transport on domains with piecewise linear boundaries}
\author{
    Andr\'e Uschmajew\thanks{Institute of Mathematics \& Centre for Advanced Analytics and Predictive Sciences, University of Augsburg, 86159 Augsburg, Germany} \and
    Andreas Zeiser\thanks{Faculty 1: School of Engineering -- Energy and Information, HTW Berlin -- University of Applied
        Sciences, 12459 Berlin, Germany}
}

\date{}

\numberwithin{equation}{section}

\begin{document}

\maketitle

\begin{abstract}
We consider the numerical solution of the linear Vlasov transport equation on bounded spatial domains with inflow boundary conditions based on low-rank approximation. We combine a finite volume discretization with a rank-adaptive step-and-truncate scheme for the resulting matrix ODE. The spatial and velocity meshes may be unstructured, while suitable numerical fluxes retain a separated space--velocity representation. For homogeneous inflow, we show that the low-rank scheme inherits the $L_2$ stability and CFL restriction of the underlying full finite volume forward Euler method. In addition, the low-rank approximation error is bounded explicitly in terms of the truncation tolerances, avoiding the modeling error associated with tangent-space projections in dynamical low-rank approximation. Numerical experiments in 1d1v and 2d2v confirm the predicted error behavior. In 2d2v, the method handles an unstructured spatial mesh with nonzero inflow and a full tensor-product discretization of approximately $5.8\cdot10^{10}$ phase-space cells while the numerical rank is at most twelve.
\end{abstract}



\section{Introduction}

The Vlasov equation is a kinetic transport equation describing the evolution of a particle density $u=u(t,\bm x,\bm v)$ in phase space~\cite{dolbeault2002}. As a model equation, we consider in this work the linear Vlasov transport equation
\[
    \partial_t u
    +
    \bm v\cdot\nabla_{\bm x}u
    -
    \bm E(t,\bm x)\cdot\nabla_{\bm v}u
    =0,
    \qquad
    (\bm x,\bm v)\in\Omega^{(x)}\times\mathbb R^d,
\]
with a spatial domain $\Omega^{(x)} \subset \mathbb R^d$ on which the electric field $\bm E$ is prescribed. A major challenge for the numerical solution is the dimensionality of the phase space: already in three spatial dimensions ($d=3$), the particle density depends on six phase-space variables in addition to time. Low-rank matrix approximations provide a widely used approach for mitigating the dimensionality by exploiting approximate separability between space and velocity.

A prominent class of low-rank methods is dynamical low-rank approximation (DLRA), where the phase-space density is approximated in the form
\[
    u_r(t,\bm x,\bm v)
    =
    \sum_{i,j=1}^{r}
    X_i(t,\bm x) S_{ij}(t) V_j(t,\bm v),
\]
and the evolution equation is projected onto the tangent space of the manifold of fixed-rank functions. For kinetic equations, this approach was introduced in~\cite{einkemmer2018a} using the projector-splitting integrator for time integration and has subsequently been extended in several directions, including conservative and rank-adaptive formulations~\cite{einkemmer2019,einkemmer2021,einkemmer2023}. A complementary approach to DLRA is to first discretize the evolution equation in full phase space and subsequently approximate the resulting high-dimensional discrete solution with low rank. In Step-And-Truncate (SAT) methods, the action of a time step of the full discrete problem is itself represented or approximated in low-rank form and then followed by an additional truncation to desired accuracy. In contrast to DLRA, the evolution in SAT is not restricted to the tangent space of a prescribed low-rank manifold, and hence no corresponding modeling error is introduced~\cite{appelo2025}. Instead, the additional low-rank approximation error is controlled directly through the truncation tolerances. SAT approaches of this type have been investigated, for example, in~\cite{kieri2019,guo2022a,guo2024a,guo2024b}. We refer to the survey~\cite{einkemmer2024review} for a general overview on low-rank methods for time-dependent kinetic equations with a discussion of both DLR and SAT.

Most low-rank discretizations of Vlasov-type equations have been developed for Cartesian and, in particular, periodic spatial domains. In contrast, in this work we focus on bounded spatial domains $\Omega^{(x)}\subset\mathbb R^d$ with piecewise linear boundaries and impose inflow boundary conditions on the spatial boundary. The treatment of physical boundaries is considerably less straightforward since boundary conditions then have to be incorporated into the low-rank evolution. In previous work~\cite{uschmajew2024}, we considered DLRA formulations with inflow for spatial domains with piecewise linear boundary based on a suitable continuous formulation that incorporates the boundary condition in the operator. This led to Friedrichs-type evolution equations for the low-rank factors, which were subsequently discretized by stabilized continuous finite elements. However, the existence of weak solutions of these continuous low-rank DLRA subproblems was not addressed in~\cite{uschmajew2024}. The present work therefore adopts a different strategy: rather than incorporating geometry and boundary conditions into continuous low-rank evolution equations, we first discretize the original transport problem and subsequently apply low-rank approximation to the resulting finite-dimensional evolution equation using a rank-adaptive SAT version of the forward Euler method.

As for the discretization, discontinuous Galerkin (DG) methods provide a natural framework for transport problems and have also been combined with low-rank approximations. A DG discretization of conservative DLRA equations was considered in~\cite{uschmajew2025} for periodic spatial domains. In the SAT setting, the LoMaC approach introduced in~\cite{guo2024b} was subsequently extended to a nodal DG discretization in~\cite{guo2024}, including the treatment of inflow boundaries on tensorized computational grids. More recently, a nodal DG discretization in physical space has been combined with a rank-adaptive low-rank representation in velocity for the BGK equation in~\cite{galindo-olarte2026}. 

In this work, however, we deliberately start from a finite volume (FV) discretization; see, e.g.,~\cite{eymard2000} for a general introduction. The lowest-order setting of FV permits a particularly transparent stability and error analysis for forward Euler schemes while retaining the essential difficulties associated with unstructured spatial meshes and inflow boundary conditions. It therefore provides a suitable setting for an analysis of the low-rank method and a starting point for future extensions to higher-order DG discretizations. We refer to~\cite{bouche2005,boyer2012} for results and further references regarding the convergence of FV schemes for linear transport equations including boundary values.

Let us describe our low-rank FV approach in a little more detail. For the numerical discretization, the velocity domain is truncated to a bounded computational domain $\Omega^{(v)}\subset\mathbb R^d$. We discretize space and velocity separately by FV meshes and use their product to construct the phase-space discretization. Notably, the meshes in both $\Omega^{(x)}$ and $\Omega^{(v)}$ may be unstructured. Using $L_2$-normalized cell indicator functions as basis functions, the coefficient matrix $U\in\mathbb R^{n^{(x)}\times n^{(v)}}$ of a FV function~$u_h$ satisfies
\[
    \|u_h\|_{L_2(\Omega^{(x)}\times\Omega^{(v)})}
    =
    \|U\|_F .
\]
This identification of the $L_2$-norm with the Frobenius norm will be essential in the stability analysis.

A central issue in the low-rank formulation is the representation of the numerical fluxes on unstructured meshes, where the face normals vary from face to face and may destroy the simple space--velocity separation of the transport field. A separated representation is immediate, for example, for a global Lax--Friedrichs flux. For boundary faces, however, one has to use an upwind flux to model the in- and outflow. Although the absolute value of the normal transport velocity appearing in this flux is not separable in general, a piecewise linear boundary involves only finitely many distinct boundary-normal directions, so that the corresponding boundary operators can be represented as finite sums of tensor product terms. Similar constructions may be used for other numerical fluxes, provided that the dependence on the face normals can be represented by sufficiently short separated expansions.

Assuming in addition that the prescribed inflow data admit a separated representation of finite rank, the resulting FV equation can be written as a matrix ODE of the form
\[
    \dot U
    =
    F_h(t,U)
    \coloneqq
    \sum_{k=1}^{n_L} A_k(t) U B_k(t)^\top
    +
    \sum_{k=1}^{n_B} a_k(t)b_k(t)^\top .
\]
The first sum represents the discrete transport operator, while the second contains the contribution of the inhomogeneous inflow boundary data. Consequently, the action of the FV operator can be evaluated in low-rank form without assembling full phase-space operators. For the time integration, we use a rank-adaptive SAT forward Euler scheme. Starting from a low-rank representation
\[
    U^n = X^n S^n V^{n,\top},
\]
we first approximate the FV right-hand side $F_h(t^n,U^n)$ by a truncated singular value decomposition (SVD). The corresponding left and right singular spaces are used to enrich the current spatial and velocity bases. A forward Euler step is then performed in the resulting augmented tensor-product space, followed by a second SVD truncation to obtain the new low-rank iterate. The first truncation therefore controls the approximation of the full discrete update direction, while the second determines the rank of the updated solution. The basis augmentation strategy is related to rank-adaptive low-rank integrators such as~\cite{ceruti2022,appelo2025}.

An important consequence of the discretize-first construction is a particularly transparent stability mechanism. The intermediate update of the proposed scheme can be written as
\[
    \tilde U^{n+1}
    =
    P_{\widetilde X}
    \bigl(
        U^n+\tau F_h(t^n,U^n)
    \bigr)
    P_{\widetilde V},
\]
where $P_{\tilde X}$ and $P_{\tilde V}$ denote the $L_2$-orthogonal projections onto the enriched spatial and velocity spaces. Thus, the intermediate update is simply an orthogonal projection of the full forward Euler FV step, and the final low-rank iterate is obtained by a subsequent SVD truncation which in fact is also an orthogonal projection. Since both operations are hence nonexpansive in the Frobenius norm, the $L_2$ stability of the full forward Euler step carries over directly to the rank-adaptive scheme. In particular, for a sufficiently regular prescribed electric field and homogeneous inflow conditions, we prove $L_2$ stability under the same CFL restriction as for the underlying full-order forward Euler method. In this context, it can be noted that stability of DLRA integrators for hyperbolic problems has also been studied in~\cite{kusch2023}. There, the stability properties depend on the evolution equations for the low-rank factors and on the particular DLRA integrator; specifically, a discretize-first projector-splitting formulation may lead to instabilities, whereas starting from a continuous DLRA formulation can recover the classical CFL condition. In contrast, the SAT formulation considered in the present work does not introduce low-rank evolution equations and stability is inherited directly from the fully discrete FV Euler step. 

The SAT structure also provides direct control of the error introduced by the low-rank approximation. Comparing the rank-adaptive iterates $U^n$ with the iterates $\hat U^n$ of the full FV forward Euler method, we obtain in Theorem~\ref{thm:euler_difference} the error estimate
\[
    \|U^n-\hat U^n\|_F
    \le
    \delta
    +
    T\varepsilon_1
    +
    \frac{T}{\tau}\varepsilon_2 ,
\]
where $\delta$ denotes the error in the initial value, $\varepsilon_1$ is the tolerance used to approximate the FV right-hand side, and $\varepsilon_2$ is the tolerance of the rank truncation for iterates. Hence, in contrast to the modeling error arising in DLRA, all additional errors introduced by the low-rank approximation are explicitly controlled by prescribed truncation tolerances. In particular, if the initial approximation is exact or satisfies $\delta=O(\tau)$, the choices
\[
    \varepsilon_1=O(\tau),
    \qquad
    \varepsilon_2=O(\tau^2)
\]
ensure that the error introduced by the low-rank scheme compared to the full forward Euler method remains of the order $O(\tau)$. Our analysis therefore controls the error relative to the full forward Euler method without imposing an a priori modeling assumption of low-rank approximability, as used, for example, in~\cite[Theorem~2]{ceruti2022}. This error control does not, however, guarantee that the numerical
ranks remain small. In practice, rank bounds could of course be enforced in the truncation steps, potentially reducing the accuracy. However, in our experiments we only used the rank-adaptive version with prescribed accuracy which automatically led to sufficiently small ranks.

To conclude, the main contributions of this work are summarized as follows.
\begin{enumerate}
    \item
    We derive a tensorized FV discretization of linear Vlasov transport on bounded piecewise linear spatial domains. The spatial and velocity meshes may both be unstructured. We show how suitable numerical fluxes, including the possibility of inflow boundaries, can be represented in separated space--velocity form and thereby lead to a matrix ODE suitable for low-rank simulation. The electric field $\bm E$ is assumed to be prescribed, which allows us to concentrate on the interaction between physical boundaries, discretization, and low-rank approximation. An extension to self-consistent field models, in particular the Vlasov--Poisson system, is left for future work.

    \item
    We employ a rank-adaptive SAT time integrator (Algorithm~\ref{alg:step_truncate}) for the resulting matrix ODE. The proposed integrator first compresses the right-hand side, enriches the current row and column spaces by the resulting bases, performs a projected forward Euler step in the enlarged tensor-product space, and finally truncates the updated matrix. We show that, for homogeneous inflow, the integrator inherits the $L_2$ stability and the CFL restriction of the full FV forward Euler scheme (Theorem~\ref{thm:stability_low_rank}). Moreover, we derive an explicit bound for the additional error introduced by the low-rank approximation in terms of the truncation tolerances (Theorem~\ref{thm:euler_difference}), without introducing the modeling error associated with the tangent-space projection in DLRA.

    \item
    We validate the method numerically in one spatial and one velocity dimension (1d1v) as well as in two spatial and two velocity dimensions (2d2v), confirming the theoretical predictions. In an additional 2d2v mesh-refinement experiment beyond the scope of the analysis, we demonstrate the practical applicability of the method on an unstructured spatial mesh with nonzero inflow. On the finest mesh, the corresponding full tensor-product discretization would contain approximately $5.8\cdot 10^{10}$ cells, while the numerical ranks remain at most twelve.
\end{enumerate}

Let us also comment on the limitations of this work and possible directions for future research. First, we do not analyze convergence of the rank-adaptive SAT scheme with respect to the mesh sizes. Instead, our analysis compares the SAT approximation with the corresponding untruncated scheme on the same discretization. In principle, an $L_2$ error estimate for the underlying FV Euler scheme could therefore be combined with our results to obtain a mesh-dependent error estimate for the SAT scheme. We do not pursue this here, since existing convergence results most closely related to our setting, such as~\cite{bouche2005,boyer2012,merlet2008}, do not directly apply, for example because they consider more restrictive numerical fluxes or do not cover bounded domains with inflow boundary conditions. Bridging these differences would require a separate, more detailed analysis.

Second, we do not address conservation properties or the preservation of nonnegativity of the computed densities. Regarding the latter, our numerical experiments indicate that negative values remain small in magnitude. Very recently, a positivity-preserving correction scheme has been proposed for DLRA in~\cite{kormann2026}, including variants that additionally also preserve mass and momentum. More generally, several conservative low-rank integrators for Vlasov-type problems have been developed~\cite{einkemmer2024review}, including methods for DG discretization~\cite{uschmajew2025}. It would be interesting to investigate in future work whether such ideas can be incorporated into the present SAT framework.

The remainder of the paper is organized as follows. In Section~\ref{sec:FV} we derive the tensorized FV discretization and discuss the representation of the interior and boundary fluxes. Section~\ref{sec:low_rank} introduces the rank-adaptive SAT scheme. Its stability and error with respect to the full FV forward Euler solution are analyzed in Section~\ref{sec:analysis}. The numerical experiments in 1d1v and 2d2v are presented in Section~\ref{sec:experiments}, and conclusions are drawn in Section~\ref{sec:conclusion}.

\section{Finite volume semi-discretization}\label{sec:FV}

In this section, we derive a FV semi-discretization of the linear Vlasov equation introduced above. We refer to~\cite{eymard2000} for a general introduction to FV methods. Since the resulting semi-discrete equations will subsequently be approximated in low rank, particular attention is paid to preserving a separated representation with respect to the spatial and velocity variables.

For the numerical discretization, we truncate the unbounded velocity domain $\mathbb R^d$ to a bounded polytope $\Omega^{(v)}\subset\mathbb R^d$ and define the computational phase-space domain
\[
    \Omega = \Omega^{(x)}\times\Omega^{(v)}.
\]
On this domain, we consider the initial-boundary value problem
\begin{equation}\label{eq:vp}
\begin{aligned}
    \partial_t u
    + \bm v\cdot\nabla_{\bm x}u
    - \bm E(t,\bm x)\cdot\nabla_{\bm v}u
    &=0
    &&\text{in } (0,T)\times\Omega, \\
    u(0,\bm x,\bm v)
    &=u_0(\bm x,\bm v)
    &&\text{in } \Omega, \\
    u(t,\bm x,\bm v)
    &=g(t,\bm x,\bm v)
    &&\text{on } (0,T)\times\Gamma_x^-, \\
    u(t,\bm x,\bm v)
    &=0
    &&\text{on } (0,T)\times\Gamma_v^-,
\end{aligned}
\end{equation}
where
\[
    \Gamma_x^-
    =
    \left\{
        (\bm x,\bm v)\in
        \partial\Omega^{(x)}\times\Omega^{(v)}
        :
        \bm n^{(x)}\cdot\bm v < 0
    \right\}
\]
and
\[
    \Gamma_v^-(t)
    =
    \left\{
        (\bm x,\bm v)\in
        \Omega^{(x)}\times\partial\Omega^{(v)}
        :
        -\bm n^{(v)}\cdot\bm E(t,\bm x) < 0
    \right\}
\]
denote the spatial and velocity inflow boundaries, respectively. The velocity domain $\Omega^{(v)}$ is chosen sufficiently large such that the influence of the artificial velocity boundary remains negligible. We impose homogeneous inflow conditions on $\Gamma_v^-(t)$. For notational convenience, we extend the spatial inflow function $g$ to the velocity inflow boundary by setting $g=0$ on $\Gamma_v^-(t)$.

\subsubsection*{Finite volume spaces}

We construct the FV space on the phase space as a product of FV spaces on the spatial and velocity domains. Let $\mathcal T^{(x)}$ and $\mathcal T^{(v)}$ be FV meshes of $\Omega^{(x)}$ and $\Omega^{(v)}$, respectively, consisting of open polytopal cells. For $y\in\{x,v\}$, let
\[
    n^{(y)} = \lvert \mathcal T^{(y)} \rvert
\]
be the number of cells in $\mathcal T^{(y)}$ and define the corresponding FV space by
\[
    \mathcal V_h^{(y)}
    =
    \left\{
        w\in L_2(\Omega^{(y)})
        :
        w|_K\in\mathcal P^0(K)
        \quad\text{for all } K\in\mathcal T^{(y)}
    \right\},
\]
where $\mathcal P^0(K)$ denotes the space of constant functions on $K$. We use the $L_2$-normalized cell indicator functions
\[
    \varphi_i^{(y)}
    =
    \frac{1}{\sqrt{|K_i^{(y)}|}}
    \chi_{K_i^{(y)}},
    \qquad
    K_i^{(y)}\in\mathcal T^{(y)},\quad i=1,\ldots,n^{(y)},
\]
as an orthonormal basis of $\mathcal V_h^{(y)}$.

The phase-space FV space is then defined as the tensor-product space
\[
    \mathcal V_h
    =
    \mathcal V_h^{(x)}\otimes\mathcal V_h^{(v)}
    =
    \left\{
        u_h(\bm x,\bm v)
        =
        \sum_{i=1}^{n^{(x)}}
        \sum_{j=1}^{n^{(v)}}
        u_{ij}
        \varphi_i^{(x)}(\bm x)
        \varphi_j^{(v)}(\bm v)
        :
        u_{ij}\in\mathbb R
    \right\}.
\]
It coincides with the space of piecewise constant functions on the product mesh
\begin{equation}\label{eq:product_mesh}
    \mathcal T
    =
    \left\{
        K^{(x)}\times K^{(v)}
        :
        K^{(x)}\in\mathcal T^{(x)},
        \;
        K^{(v)}\in\mathcal T^{(v)}
    \right\}.
\end{equation}

We collect the coefficients of a function $u_h\in\mathcal V_h$ in the matrix
\begin{equation} \label{eq:coefficient_matrix}
    U=[u_{ij}]
    \in
    \mathbb R^{n^{(x)}\times n^{(v)}}.
\end{equation}
On a product cell $K=K_i^{(x)}\times K_j^{(v)}$, the FV function $u_h$ takes the constant value
\begin{equation*}\label{eq:function_values}
    u_h|_K
    =
    \frac{u_{ij}}
    {\sqrt{|K_i^{(x)}||K_j^{(v)}|}}
    =
    \frac{u_{ij}}{\sqrt{|K|}}.
\end{equation*}
By orthonormality of the basis functions,
\[
    \|u_h\|_{L_2(\Omega)}^2
    =
    \sum_{i=1}^{n^{(x)}}
    \sum_{j=1}^{n^{(v)}}
    u_{ij}^2
    =
    \|U\|_F^2.
\]
Here and throughout, $\|\cdot\|_F$ denotes the Frobenius norm. Thus, the $L_2$-norm of a FV function is represented exactly by the Frobenius norm of its coefficient matrix.

\subsubsection*{Cell balance equation}

On the phase space $\Omega = \Omega^{(x)} \times \Omega^{(v)}$ we introduce the phase-space variable and gradient
\[
    \bm z = (\bm x,\bm v) \in \Omega^{(x)} \times \Omega^{(v)},
    \qquad
    \nabla_{\bm z} = (\nabla_{\bm x},\nabla_{\bm v}),
\]
and write the Vlasov equation in~\eqref{eq:vp} in conservative form as
\begin{equation}\label{eq:vlasov conservative form}
    \partial_t u + \nabla_{\bm z}\cdot \bm f(t,u) = 0,
    \qquad
    \bm f(t,u) = \bm a(t,\bm z) u,
    \qquad
    \bm a(t,\bm z) =
    \begin{bmatrix}
        \bm v \\
        -\bm E(t,\bm x)
    \end{bmatrix}.
\end{equation}
Here, the phase-space velocity $\bm a(t,\bm z)$ is divergence-free, $\nabla_{\bm z}\cdot\bm a=0$, so that the conservative and nonconservative forms are equivalent. Integrating the conservative equation over a cell $K \in \mathcal T$ and applying the divergence theorem gives the cell balance equation
\begin{equation}\label{eq:fv_cell_balance}
    \frac{\mathrm d}{\mathrm dt}\int_K u\,\mathrm d\bm z
    +
    \sum_{e\subset \partial K}
    \int_e \bm n_{K,e}\cdot \bm f(t,\bm z, u)\,\mathrm ds
    =
    0,
    \qquad K\in\mathcal T,
\end{equation}
where $\bm n_{K,e}$ denotes the outward unit normal of $K$ on the face $e$.

For a discrete function $u_h \in \mathcal V_h$, we approximate the normal fluxes $\bm n_{K,e} \cdot \bm f(t,\bm z, u_h)$ in~\eqref{eq:fv_cell_balance} by numerical fluxes. We refer to~\cite{leveque2002} for standard FV fluxes for hyperbolic problems. First, consider an interior face $e=\partial K^-\cap \partial K^+$ between two cells $K^-, K^+ \in \mathcal T$ and let $\bm n_e = [\bm n_e^{(x)}, \bm n_e^{(v)}]^\top$ be the unit normal pointing from $K^-$ to $K^+$. For a function $u$, the corresponding traces on $e$ are denoted by
\[
    u^-(\bm z) = \lim_{\epsilon\to 0^+} u(\bm z-\epsilon \bm n_e), \qquad
    u^+(\bm z) = \lim_{\epsilon\to 0^+} u(\bm z+\epsilon \bm n_e), \qquad \bm z \in e.
\]
Note that for functions $u_h \in \mathcal V_h$ the traces $u^\pm$ are constant on $e$ and take the values of $u_h$ in $K^-$ and $K^+$, respectively. We define
\[
    [u]_e = u^- - u^+, \qquad  \{ \bm f \}_e = \frac 1 2 (\bm f^- + \bm f^+).
\]
Define the numerical flux by
\[
    \hat f_e(t, \bm z, u^-,u^+,\bm n_e) \coloneqq \bm n_e\cdot \{ \bm a(t,\bm z)\, u \}_e
        + \frac{\alpha_e(t,\bm z)}{2}[u]_e.
\]
For the upwind flux $\hat f_e^{\mathrm{up}}$ we define
\begin{equation} \label{eq:upwind_flux}
    \alpha_e^{\mathrm{up}}(t, \bm z) = \abs{\bm n_e\cdot \bm a(t,\bm z)}.    
\end{equation}
For the local Lax--Friedrichs (Rusanov) flux $\hat f_e^{\mathrm{LF}}$ we choose a face-wise constant $\alpha_e^{\mathrm{LF}}$ such that
\begin{equation} \label{eq:lf_flux}
    \alpha_e^{\mathrm{LF}}(t) \ge \max_{\bm z\in e} \abs{\bm n_e\cdot \bm a(t,\bm z)}.
\end{equation}
For a global Lax--Friedrichs flux, we choose a parameter $\alpha^{\mathrm{gLF}}(t)$ such that 
\[ 
    \alpha^{\mathrm{gLF}}(t) \ge \max_{e \in \mathcal E} \max_{\bm z\in e} \abs{\bm n_e\cdot\bm a(t,\bm z)},
\]  
where $\mathcal E$ denotes the set of faces on which the global Lax--Friedrichs flux is applied. In the sequel, $\alpha_e$ will denote the corresponding dissipation parameter, i.e., $\alpha_e \in \{\alpha_e^{\mathrm{up}},\alpha_e^{\mathrm{LF}},\alpha^{\mathrm{gLF}}\}$ depending on the selected flux.

Second, for boundary faces $e\subset \partial K \cap \partial \Omega$ with outward unit normal $\bm n_e$, we prescribe the exterior trace by $u^+ = g$ and define the numerical flux by
\[
    \hat f_e(t, \bm z, u^-,g,\bm n_e) := \hat f_e^{\mathrm{up}}(t, \bm z, u^-,g,\bm n_e)
\]
to impose the boundary conditions.

The different numerical fluxes differ significantly with respect to their suitability for tensorization. Since the cells are polytopes, their faces are planar, and hence the components $\bm n_e^{(x)}$ and $\bm n_e^{(v)}$ are constant on each face and do not depend on the phase-space variables. Therefore,
\[
    \bm n_e\cdot \bm a(t,\bm z) =
        \bm n_e^{(x)}\cdot \bm v - \bm n_e^{(v)}\cdot \bm E(t,\bm x)
\]
is a sum of terms depending separately on $\bm x$ and $\bm v$. Consequently, the global Lax–Friedrichs flux naturally admits a separated representation. Similarly, a local Lax--Friedrichs flux can be applied on groups of phase-space cells, leading to a correspondingly larger number of tensor-product terms. 

In contrast, the upwind flux involves the coefficient $\abs{\bm n_e\cdot\bm a(t,\bm z)}$, whose absolute value generally destroys the additive separation. Nevertheless, the upwind flux can also be represented in tensorized form, but it involves a separate tensor-product term for each distinct face normal. Consequently, the number of terms grows with the number of distinct normal directions. For the boundary this number is finite and determined by the geometry of the domain, enabling us to use the upwind flux on boundary faces in a tensorized representation. We derive such a decomposition for boundary faces below. Employing upwind fluxes also for inner faces is limited by the fact that the number of distinct normal directions can be extremely high for unstructured meshes. However, for structured meshes, the number of distinct normal directions is small and the upwind flux can be used on interior faces as well.

\subsubsection*{Imposing inflow conditions}

As both the spatial and velocity boundaries are piecewise planar, we can write
\begin{equation}\label{eq: piecewise linear boudary}
    \partial\Omega^{(y)} = \bigcup_{\mu=1}^{n^{(y)}_\partial} \Gamma_\mu^{(y)},  \qquad y \in \{x,v\},
\end{equation}
with a constant outward normal $\bm n^{(y,\mu)}$ on each piece $\Gamma_\mu^{(y)}$. Due to the product structure of the phase space, the normals on the boundary faces belonging to $\Gamma_\mu^{(x)}$ and $\Gamma_\mu^{(v)}$ can be represented as $\bm n = (\bm n^{(x,\mu)}, \bm 0)$ and $\bm n = (\bm 0, \bm n^{(v,\mu)})$, respectively. Hence, on the spatial boundary
\[
    \hat f_{e}^{\mathrm{up}}\bigl(t,\bm z, u^-,g,(\bm n^{(x,\mu)},\bm 0)\bigr) =
    \begin{cases}
        (\bm n^{(x,\mu)}\cdot \bm v)\,u^-,
            & \bm n^{(x,\mu)}\cdot \bm v \ge 0, \\[0.2em]
        (\bm n^{(x,\mu)}\cdot \bm v)\,g,
            & \bm n^{(x,\mu)}\cdot \bm v < 0
    \end{cases}
\]
and on velocity boundary faces we have
\[
    \hat f_{e}^{\mathrm{up}}\bigl(t,\bm z, u^-,g,(\bm 0,\bm n^{(v,\mu)})\bigr) =
    \begin{cases}
        \bigl(-\bm n^{(v,\mu)}\cdot \bm E(t,\bm x)\bigr)\,u^-,
            & -\bm n^{(v,\mu)}\cdot \bm E(t,\bm x) \ge 0, \\[0.2em]
        0,
            & -\bm n^{(v,\mu)}\cdot \bm E(t,\bm x) < 0
    \end{cases}
\]
since homogeneous inflow boundary conditions are prescribed on the velocity inflow boundary.

\subsubsection*{Resulting semi-discrete equations}

Using the discussed numerical fluxes in \eqref{eq:fv_cell_balance}, we derive the semidiscrete equations for the coefficient matrix defined in \eqref{eq:coefficient_matrix}. To obtain a tensorized representation, we assume that the boundary data admits the separated representation
\begin{equation}\label{eq: separated g}
    g(t,\bm x, \bm v) = \sum_{\nu=1}^{n_g} g^{(x)}_\nu(t,\bm x) \, g^{(v)}_\nu(t, \bm v).
\end{equation}
Of course, in practice one might assume this only approximately. This would introduce an additional approximation error which is not discussed in our work.

For simplicity of presentation, we write out the semidiscrete equations explicitly only for the global Lax--Friedrichs flux on interior faces and the upwind flux on boundary faces. In this case we obtain the matrix ordinary differential equation
\begin{equation}
    \label{eq:ODE_FV}
    \begin{aligned}
        & \dot U 
        + \sum_{s=1}^d \left[ C^{(x)}_s U M^{(v),\top}_{v_s} +  M^{(x)}_{-E_s} U C^{(v),\top}_s \right] 
            + \frac{\alpha^{\mathrm{gLF}}(t)}{2} \left[D^{(x)} U  + U D^{(v),\top} \right] \\
        &\quad+
        \sum_{\mu=1}^{n_\partial^{(x)}}\sum_{s=1}^d B^{(x)}_{\mu,s} U M^{(v),\top}_{\chi^{(x)}_\mu v_s}
            + \sum_{\mu=1}^{n_\partial^{(v)}}\sum_{s=1}^d M^{(x)}_{-\chi^{(v)}_\mu E_s} U  B^{(v),\top}_{\mu,s} 
                + \sum_{\nu=1}^{n_g} \sum_{\mu=1}^{n_\partial^{(x)}}\sum_{s=1}^d G^{(x)}_{\nu,\mu,s} G^{(v),\top}_{\nu,\mu,s}
            = 0.
    \end{aligned}
\end{equation}
The matrices $M^{(x)}_{-E_s}$, $M^{(x)}_{-\chi^{(v)}_\mu E_s}$, $G^{(x)}_{\nu,\mu,s}$, and $G^{(v)}_{\nu,\mu,s}$ are time dependent through the electric field and the inflow data. The matrices and vectors for $y\in\{x,v\}$ are defined by
\[
\begin{alignedat}{2}
    [M^{(y)}_b]_{ij}
        &=
        \sum_{K\in \mathcal T^{(y)}}
        \int_{K} b \, \varphi^{(y)}_j \varphi^{(y)}_i\,\mathrm d\bm y,
    \qquad
    &
    [C^{(y)}_s]_{ij}
        &=
        \sum_{e\in\mathcal E^{(y)}_0}
        \int_e
            n^{(y)}_{e,s}\,
            \{\varphi_j^{(y)}\}_e
            [\varphi_i^{(y)}]_e
        \,\mathrm dS,
    \\[0.4em]
    [D^{(y)}]_{ij}
        &=
        \sum_{e\in\mathcal E^{(y)}_0}
        \int_e
            [\varphi_j^{(y)}]_e
            [\varphi_i^{(y)}]_e
        \,\mathrm dS,
    \qquad
    &
    [B^{(y)}_{\mu,s}]_{ij}
        &=
        \sum_{e \in \mathcal E^{(y)}_{b,\mu}}
        \int_e
            n_s^{(y,\mu)} \varphi_j^{(y)} \varphi_i^{(y)}
        \,\mathrm dS,
    \\[0.4em]
    [G^{(x)}_{\nu,\mu,s}]_{i}
        &=
        \sum_{e \in \mathcal E^{(x)}_{b,\mu}}
        \int_e
            n_s^{(x,\mu)}
            g^{(x)}_\nu \varphi_i^{(x)}
        \,\mathrm dS,
    \qquad
    &
    [G^{(v)}_{\nu,\mu,s}]_{i}
        &=
        \sum_{K\in \mathcal T^{(v)}}
        \int_{K}
            (1-\chi^{(x)}_\mu)\,
            v_s\,
            g^{(v)}_\nu \varphi^{(v)}_i
        \,\mathrm d\bm v .
\end{alignedat}
\]
Here, $b$ is an arbitrary function, and we denote by $\mathcal E^{(y)}_{0}$ and $\mathcal E^{(y)}_b$ the set of interior and boundary faces of $\mathcal{T}^{(y)}$, respectively. Let $\mathcal E^{(y)}_{b,\mu} = \{ e \in \mathcal E^{(y)}_b \colon e \subset \Gamma_\mu^{(y)}\}$ be the set of boundary faces on the $\mu$-th piece of the boundary. The spatial and velocity outflow indicators are defined as
\[
    \chi^{(x)}_\mu(\bm v) =
        \begin{cases}
            1, & \bm n^{(x,\mu)}\cdot \bm v \ge 0, \\[0.2em]
            0, & \bm n^{(x,\mu)}\cdot \bm v < 0.
        \end{cases}, \quad
    \chi^{(v)}_\mu(t,\bm x) =
        \begin{cases}
            1, & -\bm n^{(v,\mu)}\cdot \bm E(t,\bm x) \ge 0, \\[0.2em]
            0, & -\bm n^{(v,\mu)}\cdot \bm E(t,\bm x) < 0.
        \end{cases}
\]

The corresponding formulas for other fluxes discussed above are analogous but considerably more cumbersome.

In summary, using the product structure of the phase-space domain, a tensor product basis, suitable numerical fluxes, and a separated representation of the inflow data, the semidiscrete FV equations can be written in tensorized form as a sum of matrix products and low-rank boundary terms. This structure will be used in the next section to construct a low-rank integrator for the semidiscrete equations.

\section{Low-rank integrator \label{sec:low_rank}}

We formulate a time-stepping scheme for the semi-discrete equations~\eqref{eq:ODE_FV} based on low-rank approximations of the coefficient matrix $U(t)$. The proposed method can be seen as a low-rank version of the classical forward Euler method.

For convenience, we formulate our scheme for the matrix ODE
\begin{equation}
    \label{eq:matrix_ODE}
    \dot U = F_h(t,U) \coloneqq L_h(t,U) + B_h(t),
\end{equation}
where the affine-linear right-hand side can be written as
\begin{equation*}
        L_h(t,U) =\sum_{k=1}^{n_L} A_k(t) U B_k^\top(t), \qquad
        B_h(t) = \sum_{\ell=1}^{n_B} a_\ell(t) b_\ell^{\top}(t).
\end{equation*}
Here, 
\[ 
    U = U(t) \in \mathbb R^{n^{(x)} \times n^{(v)}}, \qquad
    A_k(t) \in \mathbb R^{n^{(x)} \times n^{(x)}}, \,
    B_k(t) \in \mathbb R^{n^{(v)} \times n^{(v)}}, \qquad
    a_\ell(t) \in \mathbb R^{n^{(x)}}, \, b_\ell(t) \in \mathbb R^{n^{(v)}}
\]
for $k=1,\ldots,n_L$ and $\ell=1,\ldots,n_B$. The semi-discrete equations~\eqref{eq:ODE_FV} obtained from our FV discretization are precisely of this form.

We first formally introduce the forward Euler method with constant time step $\tau$, as it will serve as a reference method for our low-rank integrator.

\begin{definition} \label{def:euler_fv}
    Let $N_T\in\mathbb N$ and define the uniform time discretization
    \[
        \tau = \frac{T}{N_T}, \qquad t^n = n\tau, \qquad n=0,\ldots,N_T .
    \]
    Let $\hat U^0$ be an initial matrix. Define the \emph{fully discrete forward Euler scheme} by
    \begin{equation*}\label{eq:euler_fv}
            \hat U^{n+1} = \hat U^n + \tau F_h(t^n, \hat U^n), \qquad n=0,\ldots,N_T-1.
    \end{equation*}
\end{definition}
Reasonable choices for $\hat U^0$ are the coefficients of the $L_2$ projection of the initial condition $u_0$ on the FV space $\mathcal V_h$ or an interpolation of $u_0$.

We next present the low-rank integrator, which will approximate the solution at times $t^n$ by a factorized low-rank representation
\begin{equation} \label{eq:U_low_rank}
    U(t^n) \approx U^n = X^n S^n V^{n,\top}, \qquad
    X^n \in \mathbb R^{n^{(x)} \times r^n}, \quad
    S^n \in \mathbb R^{r^n \times r^n}, \quad
    V^n \in \mathbb R^{n^{(v)} \times r^n},
\end{equation}
which corresponds to separation of spatial and velocity variables. The basic ingredient is the truncated singular value decomposition $\mathcal T^{\mathrm{svd}}_{\varepsilon}$ for factorized matrices, which we state for completeness in Algorithm~\ref{alg:truncated_SVD}. Importantly, using the factorized structure of the input matrix with small rank~$r$, Algorithm~\ref{alg:truncated_SVD} only requires computing the SVD of a small intermediate $r\times r$ matrix, which is then used to construct the truncated SVD of the full matrix. For numerical robustness, we will use column-pivoted QR decompositions for the initial orthogonalization step. We note that the main property we will exploit in the stability analysis of our SAT scheme (Theorem~\ref{thm:stability_low_rank}) is the nonexpansiveness of the SVD truncation in Frobenius norm, $\| \mathcal T^{\mathrm{svd}}_{\varepsilon} (M) \|_F \le \| M \|_F$, which is therefore explicitly stated in Algorithm~\ref{alg:truncated_SVD}. In principle, Algorithm~\ref{alg:truncated_SVD} could be replaced by other low-rank approximation methods with this property, such as randomized projection methods, provided they come with some reasonable error bounds~\cite{halko2011}.

\begin{algorithm}[t]
\DontPrintSemicolon
\LinesNumbered
\caption{Truncated SVD $\mathcal T^{\mathrm{svd}}_{\varepsilon}(M)$ for factorized matrices $M = A S B^\top$}
\label{alg:truncated_SVD}

\KwInput{Factorized matrix $M = A  S  B^\top$, $A \in \mathbb R^{n \times r}$, $ S\in \mathbb R^{r \times r}$, $B \in \mathbb R^{m \times r}$} 
\KwOutput{Truncated SVD $\mathcal T^{\mathrm{svd}}_{\varepsilon}\!(M) = U \Sigma V^\top$ satisfying
\[
    U^\top U = I, \quad V^\top V = I, \quad \| U \Sigma V^\top - M \|_{F} \le \varepsilon, \quad \| U \Sigma V^\top \|_{F} \le \| M \|_F
\]
}
\KwParameters{Tolerance $\varepsilon$}

Perform reduced column-pivoted QR decompositions:
\[
    Q_1 R_1 P_1^\top = A, \qquad Q_2 R_2 P_2^\top = B.
\]
\;

Form the reduced matrix
\[
    \tilde S = R_1 P_1^\top S P_2 R_2^\top.
\]
\;

Compute a truncated SVD $\tilde U\Sigma\tilde V^\top$ of $\tilde S$ such that
\[
\tilde U^\top \tilde U = I, \quad \tilde V^\top \tilde V = I, \quad
\|
    \tilde U\Sigma\tilde V^\top-\tilde S
    \|_F
    \le\varepsilon, \quad \| \Sigma \|_F \le \| \tilde S \|_F.
\]
\;

Return $U=Q_1\tilde U$, $V=Q_2\tilde V$, $\Sigma$.
\end{algorithm}

Note that Algorithm~\ref{alg:truncated_SVD} can also be used to compute a truncated SVD of a sum of matrices in factorized form
\[
    M = \sum_{k=1}^K \tilde A_k \tilde S_k \tilde B_k^\top
\]
by letting
\[
    A = [\tilde A_1, \ldots, \tilde A_K], \quad
    S = \mathrm{diag}(\tilde S_1, \ldots, \tilde S_K), \quad
    B = [\tilde B_1, \ldots, \tilde B_K]
\]
in the algorithm. In particular, the evaluation of $F_h$ at time $t^n$ and a matrix $U^n$ in the factorized form~\eqref{eq:U_low_rank} is given by
\[
    F_h(t^n, U^n) = L_h(t^n,U^n) + B_h(t^n)
        = \sum_{k=1}^{n_L} (A_k(t^n) X^n) S^n (B_k(t^n) V^n)^\top + \sum_{\ell=1}^{n_B} a_\ell(t^n) b_\ell^{\top}(t^n)
\]
and can therefore be compressed by a truncated SVD using Algorithm~\ref{alg:truncated_SVD}.

Using these ingredients, we can formulate one step of the proposed rank-adaptive integrator; see Algorithm~\ref{alg:step_truncate}. For a given $U^n$ in the form~\eqref{eq:U_low_rank}, we first compute the truncated SVD of the right-hand side. The resulting bases $\bar X$ and $\bar V$ resolve the right-hand side up to the prescribed tolerance $\varepsilon_1$. Augmenting these bases with the current bases $X^n$ and $V^n$ ensures that the current iterate $U^n$ can still be represented exactly in the enlarged bases $\tilde X$ and $\tilde V$. These enlarged bases are used to compute an update of the coefficient matrix $\tilde S^{n+1}$ in a Galerkin step. Finally, the resulting matrix is truncated again with tolerance $\varepsilon_2$ to obtain the low-rank iterate $U^{n+1}$.

The method belongs to the class of SAT schemes, in which a time step is performed in the ambient space and the result is subsequently compressed to low rank; cf.~\cite[Sect.~3]{einkemmer2024review}. For $\varepsilon_1=0$, the algorithm reduces to a rank-adaptive variant of the idealized projection method of~\cite{kieri2019} (for a forward Euler step). More specifically, the algorithm follows the same basic structure as the Merge method from~\cite{appelo2025}: the right-hand side $F_h$ is approximated in low-rank form, the bases are augmented, a Galerkin step is performed in the enlarged spaces, and the result is subsequently compressed. In~\cite{appelo2025}, however, this update is carried out implicitly and the basis enrichment additionally involves bases from so called $K$- and $L$-steps.

\begin{algorithm}[t]
\LinesNumbered
\DontPrintSemicolon
\caption{Rank-adaptive SAT integrator $\Phi_\tau(t^n,U^n; \varepsilon_1, \varepsilon_2)$}
\label{alg:step_truncate}

\KwInput{$U^n=X^n S^n V^{n,\top} \in \mathbb{R}^{n^{(x)} \times n^{(v)}}$ in SVD form, at time $t^n$}
\KwOutput{$U^{n+1}=X^{n+1}S^{n+1}V^{n+1,\top} \in \mathbb{R}^{n^{(x)} \times n^{(v)}}$ in SVD form}
\KwParameters{Step size $\tau$, tolerances $\varepsilon_1, \varepsilon_2$}
\BlankLine
Compute a truncated SVD of the right-hand side:
\[
    [\bar X, \bar S, \bar V]
        = \mathcal T^{\mathrm{svd}}_{\varepsilon_1} \big(F_h(t^n, U^n)\big).
\] \;
Compute orthonormal bases of the augmented spaces:
\[ 
    \tilde X = \ortho[X^n, \bar X], \qquad 
    \tilde V = \ortho[V^n, \bar V].
\] \;
Galerkin step with $\tilde S^n = (\tilde X^\top X^n) S^n (\tilde V^\top V^n)^\top$:
\[ 
    \tilde S^{n+1} = \tilde S^n 
        + \tau\, \tilde X^\top \, F_h(t^n,U^n) \, \tilde V.
\] \;
Rank truncation:
\[ 
    [X^{n+1}, S^{n+1}, V^{n+1}] 
    = \mathcal T^{\mathrm{svd}}_{\varepsilon_2}
        \bigl(\tilde X \tilde S^{n+1} \tilde V^\top\bigr).
\] \;
\end{algorithm}

Iterating these steps, we obtain our low-rank integrator for the semi-discrete equations~\eqref{eq:matrix_ODE}.

\begin{definition} \label{def:euler_low_rank}
    Let the assumptions of Definition~\ref{def:euler_fv} hold and let $\varepsilon_1,\varepsilon_2\ge 0$. Assume that we are given an initial matrix $U^0$ in SVD form. The iteration defined by
    \begin{equation}
        \label{eq:low_rank_fv}
            U^{n+1} = \Phi_\tau(t^n, U^n; \varepsilon_1, \varepsilon_2), \qquad n=0,\ldots,N_T-1
    \end{equation}
    using Algorithm~\ref{alg:step_truncate} for computing $\Phi_\tau$ is called the \emph{rank-adaptive SAT scheme}.
\end{definition}

In order to estimate the computational complexity, note that the discretization matrices occurring in the right-hand side are sparse, with a uniformly bounded number of nonzero entries per row and column. Their application therefore requires only linear work in the spatial or velocity dimension. Assuming that the ranks of the iterates are bounded by $r$, the factorized right-hand side has rank at most $n_L r+n_B$, and the dimension of the augmented bases is bounded by
\[
    r_*=(n_L+1)r+n_B.
\]
The computational complexity of one step of the low-rank integrator is therefore
\[
    \mathcal O\bigl(r_*^3+r_*^2(n^{(x)}+n^{(v)})\bigr).
\]
In contrast, the cost of one step of the full forward Euler scheme is
\[
    \mathcal O\bigl(n_L n^{(x)}n^{(v)}\bigr).
\]
Hence, if $n_L, n_B, r\ll\min\{n^{(x)},n^{(v)}\}$, the low-rank integrator is significantly cheaper than the full forward Euler scheme.

\section{Stability and conditional convergence} \label{sec:analysis}

In this section, we derive the main theoretical results for the proposed rank-adaptive SAT integrator applied to the FV discretization of the Vlasov equation.  We show that the method inherits the $L_2$ stability condition of the fully discrete forward Euler scheme and derive a bound on the difference to the full forward Euler method that explicitly accounts for the truncation errors.

As a first step, we state a stability result for the fully discrete forward Euler scheme with sufficiently small time steps governed by a CFL condition. We simplify the discussion by assuming homogeneous boundary conditions, that is, zero inflow. The general case can be treated by including the contribution of the inflow data, but requires additional technicalities. However, since the stability estimate will later be applied to the difference of two discrete solutions, for which the common inflow contribution cancels, the homogeneous version is sufficient for our purposes.

\begin{lemma}\label{lemma:stability_fe_vlasov}
    Consider the Vlasov equation~\eqref{eq:vp} on a domain with piecewise planar boundary~\eqref{eq: piecewise linear boudary} and homogeneous inflow conditions, that is, $g=0$. Assume the mesh $\mathcal T$ as defined in~\eqref{eq:product_mesh} and the FV discretization based on the numerical fluxes described in Section~\ref{sec:FV}, with uniformly bounded parameters $\alpha_e$. Let $\tau_{\mathrm{CFL}}\in(0,\infty]$ be the CFL threshold defined in~\eqref{eq:CFL}. Then, for every initial value $\hat U^0$ and for every time step $\tau$ with $0<\tau\le\tau_{\mathrm{CFL}}$, the fully discrete forward Euler scheme from Definition~\ref{def:euler_fv} is $L_2$ stable in the sense that its iterates $(\hat U^n)$ satisfy
    \[
        \|\hat U^n\|_F\le\|\hat U^0\|_F
    \]
    for all $n$ with $t^n=n\tau\le T$.
\end{lemma}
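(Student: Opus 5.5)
We will prove the lemma with the standard energy argument for forward Euler: show that one step is nonexpansive in the Frobenius norm and then iterate. Since $g=0$, the right-hand side is linear, $F_h(t,U)=L_h(t,U)$. Identify $\hat U^n$ with the FV function $u_h^n\in\mathcal V_h$, so that $\|\hat U^n\|_F=\|u_h^n\|_{L_2(\Omega)}$ by orthonormality of the basis. One step then expands as
\[
    \|\hat U^{n+1}\|_F^2
    =
    \|\hat U^n\|_F^2
    +2\tau\langle \hat U^n, L_h(t^n,\hat U^n)\rangle_F
    +\tau^2\|L_h(t^n,\hat U^n)\|_F^2 .
\]
It therefore suffices to show
\[
    2\langle U, L_h(t,U)\rangle_F+\tau\|L_h(t,U)\|_F^2\le 0
\]
for all $U$, all $t$ and all $0<\tau\le\tau_{\mathrm{CFL}}$. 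Induction on $n$ then gives the claim.

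\textbf{Step 1: the dissipation identity.} Write $\langle U,L_h(t,U)\rangle_F=-\sum_{K}\sum_{e\subset\partial K}\int_e \hat f_e\,u_K\,\mathrm ds$, with cell values $u_K=u_{ij}/\sqrt{|K|}$. Reorganize the double sum face by face.

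On an interior face with traces $u^\pm$, the central part contributes $\bm n_e\cdot\bm a\,\{u\}[u]=\tfrac12\bm n_e\cdot\bm a\,((u^-)^2-(u^+)^2)$, and the penalty contributes $\tfrac{\alpha_e}{2}[u]^2$. Summing the central parts over all faces and applying the divergence theorem cell-wise with $\nabla_{\bm z}\cdot\bm a=0$ makes them cancel, leaving only boundary contributions. On boundary faces with $g=0$, the upwind flux gives $\tfrac12|\bm n_e\cdot\bm a|\,(u^-)^2$. This holds on both the spatial and velocity boundaries, by the case formulas for $\hat f_e^{\mathrm{up}}$ in Section~\ref{sec:FV}. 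The result is
\[
    \langle U,L_h(t,U)\rangle_F=-\tfrac12\sum_{e\in\mathcal E_0}\int_e\alpha_e[u_h]_e^2\,\mathrm ds-\tfrac12\sum_{e\in\mathcal E_b}\int_e|\bm n_e\cdot\bm a|(u_h^-)^2\,\mathrm ds\le 0 .
\]
This requires $\alpha_e\ge|\bm n_e\cdot\bm a|$, which holds for all three flux choices.

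\textbf{Step 2: bounding the $\tau^2$ term (the main obstacle).} I would write $L_h(U)$ cell-wise as a flux difference. Split each numerical flux into upwind and downwind parts, $\hat f_e=\tfrac{\bm n_e\cdot \bm a+\alpha_e}{2}u^-+\tfrac{\bm n_e\cdot\bm a-\alpha_e}{2}u^+$, whose weights have signs $\ge 0$ and $\le 0$ respectively. Using $\sum_{e\subset\partial K}\int_e\bm n_{K,e}\cdot\bm a\,\mathrm ds=0$, the update on cell $K$ becomes a combination of the jumps $[u]_e$ over interior faces and of $u_K$ over boundary faces, with weights bounded by $\int_e(\alpha_e+|\bm n_e\cdot\bm a|)/2\,\mathrm ds\le\int_e\alpha_e\,\mathrm ds$. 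Cauchy--Schwarz on each cell then gives
\[
    |K|\,|(\partial_t u_h)_K|^2\le \frac{\sum_{e\subset\partial K}\int_e\alpha_e\,\mathrm ds}{|K|}\sum_{e\subset\partial K}\int_e\alpha_e\,w_e^2\,\mathrm ds ,
\]
where $w_e$ is the jump on interior faces and the trace on boundary faces. Summing over $K$, each interior face is counted twice. This gives $\|L_h(U)\|_F^2\le 2\Lambda\cdot(-2\langle U,L_h U\rangle_F)$ up to the boundary weight, where $\Lambda=\max_K|K|^{-1}\sum_{e\subset\partial K}\int_e\alpha_e\,\mathrm ds$. The delicate points are exactly this counting and the fact that on boundary faces only the outflow weight $|\bm n_e\cdot\bm a|$ appears, which matches the dissipation. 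I would define (or expect \eqref{eq:CFL} to define) $\tau_{\mathrm{CFL}}$ as $1/(2\Lambda)$ up to the constant produced by this computation, with $\tau_{\mathrm{CFL}}=\infty$ when $\Lambda=0$. Uniform boundedness of $\alpha_e$ in $t$ makes $\Lambda$ finite and time-independent.

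\textbf{Step 3: conclusion.} For $\tau\le\tau_{\mathrm{CFL}}$ the inequality of Step 2 yields $\tau\|L_h(U)\|_F^2\le -2\langle U,L_h(U)\rangle_F$. Hence $\|\hat U^{n+1}\|_F\le\|\hat U^n\|_F$ for every $n$, and iterating over $n$ with $t^n\le T$ gives $\|\hat U^n\|_F\le\|\hat U^0\|_F$.
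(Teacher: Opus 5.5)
Step~1 and the overall energy approach are sound. But Step~2, as you carry it out, does not prove the lemma for the threshold it is stated with.

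In your notation, \eqref{eq:CFL} reads $\tau_{\mathrm{CFL}}=\bigl(\sup_{K,t}\frac{1}{2|K|}\sum_{e\subset\partial K}\int_e\alpha_e\,\mathrm ds\bigr)^{-1}=2/\Lambda$. Your bound $\|L_h(U)\|_F^2\le 2\Lambda\cdot(-2\langle U,L_h(U)\rangle_F)$ gives nonexpansiveness only for $\tau\le 1/(2\Lambda)=\tau_{\mathrm{CFL}}/4$. The lemma fixes $\tau_{\mathrm{CFL}}$ through \eqref{eq:CFL}, so leaving the constant open (``up to the constant produced by this computation'') leaves the statement unproved.

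The factor is not cosmetic. Already for constant-speed upwind transport on a uniform grid of width $h$, \eqref{eq:CFL} is the sharp condition $\tau\le h/\abs{a}$, whereas your estimate gives only $\tau\le h/(4\abs{a})$. The 2d2v experiment runs at $\tau/\tau_{\mathrm{CFL}}\approx 0.91$, and Theorem~\ref{thm:stability_low_rank} relies on the same threshold.

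The factor $4$ comes from two crude bounds, and both disappear if you keep the exact weights. Write $|K|(\partial_t u_h)_K=-\sum_{e\subset\partial K}\int_e c_{K,e}w_{K,e}\,\mathrm ds$, where:
\begin{itemize}
\item $c_{K,e}=\tfrac12(\alpha_e-\bm n_{K,e}\cdot\bm a)\ge 0$ on every face, with $\alpha_e=\abs{\bm n_e\cdot\bm a}$ on boundary faces;
\item $w_{K,e}=u_K-u_L$ on an interior face shared with $L$, and $w_{K,e}=u_K$ on a boundary face.
\end{itemize}
On boundary faces this weight is the inflow part of $\bm n_e\cdot\bm a$, not the outflow part as you wrote. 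It is still dominated by the boundary dissipation $\tfrac12\abs{\bm n_e\cdot\bm a}u_K^2$, so this does no harm.

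The two corrections are as follows.
\begin{itemize}
\item First loss: because $\bm a$ is divergence-free, $\sum_{e\subset\partial K}\int_e c_{K,e}\,\mathrm ds=\tfrac12\sum_{e\subset\partial K}\int_e\alpha_e\,\mathrm ds\le|K|/\tau_{\mathrm{CFL}}$ exactly. You used $\sum_e\int_e\alpha_e$ instead, which is twice as large.
\item Second loss: $c_{K,e}+c_{L,e}=\alpha_e$ on an interior face, since $\bm n_{L,e}=-\bm n_{K,e}$. So summing over cells involves no double counting.
\end{itemize}
Applying Cauchy--Schwarz cellwise with these weights gives
\[
    \|L_h(U)\|_F^2=\sum_{K\in\mathcal T}|K|\,\abs{(\partial_t u_h)_K}^2
    \le\frac{1}{\tau_{\mathrm{CFL}}}\sum_{K\in\mathcal T}\sum_{e\subset\partial K}\int_e c_{K,e}\,w_{K,e}^2\,\mathrm ds
    \le-\frac{2}{\tau_{\mathrm{CFL}}}\,\langle U,L_h(U)\rangle_F .
\]
The last step uses the two facts above, $c_{K,e}\le\abs{\bm n_e\cdot\bm a}$ on boundary faces, and your Step~1 identity. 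Your Step~3 then holds for all $0<\tau\le\tau_{\mathrm{CFL}}$.

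With this repair your argument is a valid alternative to the paper's. The paper writes the update as a convex combination of $\hat u_K^n$ and neighbouring or ghost values. There $p_{KL}^n\ge 0$ follows from $\alpha_e\ge\abs{\bm n\cdot\bm a}$, and $p_{KK}^n\ge0$ is exactly \eqref{eq:CFL} after using divergence-freeness. It then applies Jensen cellwise and bounds the column sums $|K|p_{KK}^n+\sum_L|L|p_{LK}^n\le|K|$. Both proofs use the same ingredients and give the same threshold. The paper's version additionally shows the scheme is monotone, while yours makes the dissipation identity explicit.
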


The phase-space velocity $\bm a$ in the conservative form~\eqref{eq:vlasov conservative form} is divergence-free. Consequently, the assertion, including the explicit CFL threshold in~\eqref{eq:CFL}, follows directly from the more generally Lemma~\ref{lemma:stability_fe} in the appendix.

We now proceed to prove the $L_2$ stability of the rank-adaptive SAT integrator in Algorithm~\ref{alg:step_truncate} under the same CFL condition as in Lemma~\ref{lemma:stability_fe_vlasov}. In fact, the key observation is that projections and rank truncations do not compromise the norm stability of the forward Euler scheme.

\begin{theorem}[Stability of rank-adaptive SAT integrator] \label{thm:stability_low_rank}
    Under the same assumptions as in Lemma~\ref{lemma:stability_fe_vlasov}, let $(U^n)_{n=0}^{N_T}$ be the sequence generated by the rank-adaptive SAT scheme~\eqref{eq:low_rank_fv} of Definition~\ref{def:euler_low_rank}. Then for every initial value $U^0$ and for every time step $\tau$ with $0<\tau \le \tau_{\mathrm{CFL}}$ it holds that
    \[
        \| U^n \|_F \le \| U^0 \|_F
    \]
    for all $n$ with $t^n = n \tau \le T$.
\end{theorem}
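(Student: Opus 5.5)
The plan is to show that a single step of Algorithm~\ref{alg:step_truncate} does not increase the Frobenius norm, that is, $\|U^{n+1}\|_F\le\|U^n\|_F$. Induction over $n$ then yields the claim. The idea is to write the step as orthogonal projections and an SVD truncation applied to one full forward Euler step started from $U^n$. The full step is nonexpansive by Lemma~\ref{lemma:stability_fe_vlasov}, and the other two operations are nonexpansive by construction.

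First, let $P_{\tilde X}=\tilde X\tilde X^\top$ and $P_{\tilde V}=\tilde V\tilde V^\top$, where $\tilde X,\tilde V$ are the orthonormal bases from line~2 of the algorithm. Since $\tilde X$ spans a space containing the range of $X^n$, we have $P_{\tilde X}X^n=X^n$, and likewise $P_{\tilde V}V^n=V^n$. Hence
\[
\tilde X\tilde S^n\tilde V^\top=P_{\tilde X}X^nS^nV^{n,\top}P_{\tilde V}=U^n = P_{\tilde X}U^nP_{\tilde V}.
\]
It follows that
\[
\tilde X\tilde S^{n+1}\tilde V^\top = P_{\tilde X}\bigl(U^n+\tau F_h(t^n,U^n)\bigr)P_{\tilde V}.
\]
Since $g=0$, we have $F_h=L_h$, so $U^n+\tau F_h(t^n,U^n)$ is exactly one step of the scheme in Definition~\ref{def:euler_fv} started from $\hat U^0:=U^n$ at time $t^n$. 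Note that this holds regardless of how $\bar X,\bar V$ were chosen: the first truncation (tolerance $\varepsilon_1$) affects only which subspaces are used, not the stability argument.

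Next, I bound the norms. Multiplying by orthogonal projections on either side is nonexpansive in the Frobenius norm, because $\|PMQ\|_F\le\|P\|_2\|M\|_F\|Q\|_2$. The SVD truncation of Algorithm~\ref{alg:truncated_SVD} satisfies $\|\mathcal T^{\mathrm{svd}}_{\varepsilon_2}(M)\|_F\le\|M\|_F$. Combining these gives
\[
\|U^{n+1}\|_F\le\|U^n+\tau L_h(t^n,U^n)\|_F.
\]

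The main obstacle is to justify $\|U^n+\tau L_h(t^n,U^n)\|_F\le\|U^n\|_F$ from Lemma~\ref{lemma:stability_fe_vlasov}. That lemma is stated for a whole trajectory starting at time $0$, whereas here I need a one-step statement at an arbitrary time $t^n$. I would handle this in one of two ways. One option is to invoke the underlying one-step estimate of Lemma~\ref{lemma:stability_fe} from the appendix, which presumably shows $\|(I+\tau L_h(t,\cdot))\|\le1$ for every $t$ whenever $\tau\le\tau_{\mathrm{CFL}}$, with $\tau_{\mathrm{CFL}}$ in~\eqref{eq:CFL} uniform in $t$. The other option is to apply Lemma~\ref{lemma:stability_fe_vlasov} with $n=1$ to a time-shifted problem with field $\bm E(\cdot+t^n,\bm x)$, checking that the CFL threshold~\eqref{eq:CFL} is defined through suprema over $[0,T]$ and hence still applies. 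With either option, $\|U^{n+1}\|_F\le\|U^n\|_F$, and induction finishes the proof.
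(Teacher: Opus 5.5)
Your proposal is correct and matches the paper's proof. Both write $\tilde X\tilde S^{n+1}\tilde V^\top$ as $P_{\tilde X}\bigl(U^n+\tau F_h(t^n,U^n)\bigr)P_{\tilde V}$ and combine nonexpansiveness of the forward Euler step, the orthogonal projections, and the SVD truncation to get $\|U^{n+1}\|_F\le\|U^n\|_F$, then induct. Your care in applying Lemma~\ref{lemma:stability_fe_vlasov} as a one-step bound at time $t^n$ addresses a point the paper passes over. Your first resolution is the right one, since the proof of Lemma~\ref{lemma:stability_fe} shows $\|\hat u_h^{n+1}\|_{L_2(\Omega)}\le\|\hat u_h^n\|_{L_2(\Omega)}$ for arbitrary $\hat u_h^n$, with $\tau_{\mathrm{CFL}}$ uniform over $t\in[0,T]$.
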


\begin{proof}
    We analyze one step of Algorithm~\ref{alg:step_truncate}. Observe that the intermediate quantity
    \[
        \tilde U^{n+1} = \tilde X \tilde S^{n+1} \tilde V^\top
    \]
    is just the orthogonal projection of a full forward Euler step onto the tensor product of spaces spanned by the orthonormal columns of \(\tilde X\) and \(\tilde V\), respectively. Indeed, by definition of $\tilde S^{n+1}$ and $\tilde S^n$ we have
    \begin{align}
    \tilde U^{n+1} &= \tilde X [\tilde S^n  + \tau\, \tilde X^\top \, F_h(t^n,U^n) \, \tilde V ] \tilde V^\top \notag \\
    &=\tilde X \tilde X^\top [ X^n S^n  (V^n)^\top + \tau \, F_h(t^n,U^n) ] \tilde V \tilde V^\top \notag \\
    &= \tilde X \tilde X^\top [ U^n + \tau \, F_h(t^n,U^n) ] \tilde V \tilde V^\top. \label{eq:intermediate step}
    \end{align}
    By Lemma~\ref{lemma:stability_fe_vlasov}, the forward Euler step is nonexpansive, so we get
    \[
        \| \tilde U^{n+1} \|_F \le \| U^n + \tau \, F_h(t^n,U^n) \|_F \le \| U^n \|_F.
    \] 
    The iterate $U^{n+1}$ is obtained by a truncated SVD of $\tilde U^{n+1}$ which is also nonexpansive in the Frobenius norm. Hence,
    \[
        \| U^{n+1} \|_F \le \|\tilde U^{n+1}\|_F \le \|U^n\|_F
    \]
    and the overall assertion follows by induction.
\end{proof}

We can now compare the rank-adaptive SAT scheme with the fully discrete forward Euler scheme in the presence of inhomogeneous inflow.

\begin{theorem} \label{thm:euler_difference}
    Assume the conditions of Theorem~\ref{thm:stability_low_rank}, but allow for nonhomogeneous inflow data~$g$. Let $(U^n)$ and $(\hat U^n)$ be the sequences generated by the rank-adaptive SAT scheme (Definition~\ref{def:euler_low_rank}) and the fully discrete forward Euler scheme (Definition~\ref{def:euler_fv}), respectively, using the same inflow data and the same time step $0<\tau\le\tau_{\mathrm{CFL}}$. Assume
    \[
        \|U^0-\hat U^0\|_F\le\delta
    \]
    for some $\delta \ge 0$. Then, 
    \[
        \|U^n-\hat U^n\|_F \le \delta+t^n\varepsilon_1 +\frac{t^n}{\tau}\varepsilon_2
        \le \delta+T\varepsilon_1 +\frac{T}{\tau}\varepsilon_2
    \]
    for all $n$ with $t^n=n\tau\le T$.
\end{theorem}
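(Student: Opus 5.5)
We will prove a one-step recursion for $e^n \coloneqq \|U^n-\hat U^n\|_F$ of the form
\[
    e^{n+1} \le e^n + \tau\varepsilon_1 + \varepsilon_2 ,
\]
from which the claim follows by induction, since $n\tau = t^n$ and $n\varepsilon_2 = (t^n/\tau)\varepsilon_2$. Write $\Psi(U) \coloneqq U + \tau F_h(t^n,U)$ for the full forward Euler step. The full iterate is $\hat U^{n+1}=\Psi(\hat U^n)$. By~\eqref{eq:intermediate step} in the proof of Theorem~\ref{thm:stability_low_rank}, the intermediate matrix is $\tilde U^{n+1} = P_{\tilde X}\Psi(U^n)P_{\tilde V}$ with $P_{\tilde X}=\tilde X\tilde X^\top$ and $P_{\tilde V}=\tilde V\tilde V^\top$. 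This identity does not use $g=0$, so it remains valid here.

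First, the truncation error from step 4 of Algorithm~\ref{alg:step_truncate} is handled by the guarantee of Algorithm~\ref{alg:truncated_SVD}, which gives $\|U^{n+1}-\tilde U^{n+1}\|_F\le\varepsilon_2$. It therefore remains to bound $\|\tilde U^{n+1}-\Psi(\hat U^n)\|_F \le e^n+\tau\varepsilon_1$. We split
\[
    \tilde U^{n+1}-\Psi(\hat U^n) = \bigl(\tilde U^{n+1}-\Psi(U^n)\bigr) + \bigl(\Psi(U^n)-\Psi(\hat U^n)\bigr).
\]
Naively bounding these two pieces separately loses a factor, so we handle them jointly using the projection structure.

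Since $U^n$ lies in $\mathrm{span}$ of $\tilde X$ and $\tilde V$, we have $P_{\tilde X}U^nP_{\tilde V}=U^n$. Hence $\Psi(U^n)-\tilde U^{n+1} = \tau\bigl(F-P_{\tilde X}FP_{\tilde V}\bigr)$ with $F\coloneqq F_h(t^n,U^n)$. The range of $\bar F \coloneqq \bar X\bar S\bar V^\top$ is contained in the enriched spaces, so $P_{\tilde X}\bar F P_{\tilde V}=\bar F$. Therefore
\[
    F-P_{\tilde X}FP_{\tilde V} = (I-\mathcal P)(F-\bar F), \qquad \mathcal P(M)\coloneqq P_{\tilde X}MP_{\tilde V}.
\]
Its Frobenius norm is at most $\|F-\bar F\|_F\le\varepsilon_1$, because $I-\mathcal P$ is an orthogonal projection in the Frobenius inner product.

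For the second piece, $F_h$ is affine, so the inflow terms cancel and $\Psi(U^n)-\Psi(\hat U^n) = (U^n-\hat U^n)+\tau L_h(t^n,U^n-\hat U^n)$. This is exactly one forward Euler step for the homogeneous problem applied to $U^n-\hat U^n$. By Lemma~\ref{lemma:stability_fe_vlasov} (one step, $\tau\le\tau_{\mathrm{CFL}}$), its norm is at most $e^n$.

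The key step is combining the two pieces. Writing $D=\Psi(U^n)-\Psi(\hat U^n)$ and $R=(I-\mathcal P)(\tau F)$, we get
\[
    \tilde U^{n+1}-\Psi(\hat U^n)=D-R,
\]
and the triangle inequality gives $\|D-R\|_F\le e^n+\tau\varepsilon_1$. This already suffices, so the joint handling only needs to avoid a spurious factor. The subtle point, which is the expected obstacle, is making sure that the homogeneous-inflow lemma legitimately applies to the difference $U^n-\hat U^n$. This requires that the linear part $L_h$ coincides with the operator of the $g=0$ discretization; this holds because in~\eqref{eq:ODE_FV} $g$ enters only through the $G$-terms. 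Collecting the bounds gives $e^{n+1}\le e^n+\tau\varepsilon_1+\varepsilon_2$, and induction starting from $e^0\le\delta$ concludes the proof.
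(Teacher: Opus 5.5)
Your proof is correct and follows the paper's argument essentially step by step. You use the identity $\tilde U^{n+1}=U^n+\tau P_{\tilde X}F_h(t^n,U^n)P_{\tilde V}$, bound the projection error by $\varepsilon_1$ because the enriched space contains $\mathcal T^{\mathrm{svd}}_{\varepsilon_1}(F_h(t^n,U^n))$ (your identity $F-\mathcal P F=(I-\mathcal P)(F-\bar F)$ is just the paper's best-approximation argument written out), cancel $B_h$, apply Lemma~\ref{lemma:stability_fe_vlasov} once, and conclude by induction. Your remark that bounding the two pieces separately ``loses a factor'' is unfounded and can be dropped: the plain triangle inequality you end up using is exactly what the paper does.
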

\begin{proof}
    We consider the step from $t^n$ to $t^{n+1}$ of the rank-adaptive SAT scheme. By~\eqref{eq:intermediate step}, the intermediate update before the $\varepsilon_2$-truncation satisfies
    \[
        \tilde U^{n+1} = U^{n} + \tau P_{\tilde X} F_h(t^n, U^n) P_{\tilde V}
    \]
    where $P_{\tilde X} = \tilde X \tilde X^\top$ and $P_{\tilde V} = \tilde V \tilde V^\top$. Here we used that the enlarged spaces contain the column spaces of $X^n$ and $V^n$, and hence $P_{\tilde X}U^nP_{\tilde V}=U^n$. Since $\bar X\bar S\bar V^\top = \mathcal T^{\mathrm{svd}}_{\varepsilon_1} \big(F_h(t^n, U^n)\big)$ belongs to the enlarged tensor-product space, the best-approximation property of the orthogonal projection yields
    \[
        \| P_{\tilde X} F_h(t^n, U^n) P_{\tilde V} - F_h(t^n, U^n)\|_F \le \| \mathcal T^{\mathrm{svd}}_{\varepsilon_1} \big(F_h(t^n, U^n)\big) - F_h(t^n, U^n)\|_F  \le \varepsilon_1.
    \]
    It follows that 
    \begin{align*}
            \| U^{n+1} - \hat U^{n+1} \|_F
            & \le \| U^{n+1} - \tilde U^{n+1} \|_F  + \| \tilde U^{n+1} - \hat U^{n+1} \|_F \\
            & \le \varepsilon_2 + \| U^n  + \tau P_{\tilde X} F_h(t^n,U^n) P_{\tilde V} - \hat U^n - \tau F_h(t^n, \hat U^n)\|_F \\
            & \le \varepsilon_2 + \tau \varepsilon_1 + \| U^n + \tau F_h(t^n,U^n) - \hat U^n  - \tau F_h(t^n,\hat U^n)\|_F\\
            & = \varepsilon_2 + \tau \varepsilon_1 + \| (U^n - \hat U^n )  + \tau L_h(t^n,U^n - \hat U^n) \|_F.
    \end{align*}
    In the last equality, the affine terms $B_h(t^n)$ related to the boundary inflow cancel since both schemes use the same inflow data. Hence the last term inside the norm is a fully discrete forward Euler step for homogeneous inflow conditions, which by Lemma~\ref{lemma:stability_fe_vlasov} satisfies
    \[
     \| (U^n - \hat U^n )  + \tau L_h(t^n,U^n - \hat U^n) \|_F \le \| U^n - \hat U^n \|_F.
    \]
    We conclude
    \[
        \| U^{n+1} - \hat U^{n+1} \|_F
        \le \| U^{n} - \hat U^{n} \|_F + \tau \varepsilon_1 + \varepsilon_2.
    \]
    Iterating this estimate and using $\|U^0-\hat U^0\|_F\le\delta$ proves the assertion.
\end{proof}

Theorem~\ref{thm:euler_difference} suggests a heuristic how the truncation tolerances should scale with the time step in order for the rank-adaptive SAT scheme to retain the first-order accuracy of the fully discrete forward Euler scheme. Indeed, assume that for all admissible time steps $0<\tau \leq \tau_{\mathrm{CFL}}$, the full forward Euler scheme satisfies
\[
    \|U(t^n)-\hat U^n\|_F \leq C \tau
\]
for all $n$ with $t^n = n \tau \leq T$, where $C$ is independent of $\tau$. Here, $U(t)$ denotes the exact solution of the semi-discrete matrix ODE~\eqref{eq:ODE_FV}, that is, the coefficient matrix of the semi-discrete FV solution. Suppose further that
\[
    \|U^0-\hat U^0\|_F \leq \frac{C}{3} \tau.
\]
Then, choosing
\[
    \varepsilon_1 = \frac{C}{3T}\tau, \qquad \varepsilon_2 = \frac{C}{3T}\tau^2
\]
Theorem~\ref{thm:euler_difference} yields
\[
    \|U^n-\hat U^n\|_F\le C\tau.
\]
Consequently,
\[
    \|U(t^n)-U^n\|_F  \le \|U(t^n)-\hat U^n\|_F  +  \|\hat U^n-U^n\|_F \le 2C\tau.
\]

Clearly, these observations only concern the accuracy of the rank-adaptive scheme and do not guarantee whether ``low-rank'' numerical solutions are obtained when using this heuristic. As demonstrated in the numerical experiments this may still be expected in several cases.

\section{Numerical experiments}\label{sec:experiments}

We present numerical experiments to demonstrate the performance of the proposed rank-adaptive SAT integrator and illustrate the theoretical convergence result from the previous section. In a first experiment we will investigate how the truncation tolerances influence the accuracy of the method. In order to be able to compare the results with a fully discrete forward Euler integrator, we restrict ourselves to the case of a one-dimensional spatial and velocity domain (1d1v). As a second experiment we then consider a two-dimensional triangular spatial domain with inflow and a corresponding two-dimensional velocity domain (2d2v). Here, we discretize the spatial domain with unstructured meshes to showcase the geometric flexibility of the method. Since the number of unknowns exceeds $5.8\cdot 10^{10}$ phase-space cells for the finest refinement level, we no longer compare the low-rank solution with the full FV solution. Instead, we estimate the $L_2$ error using a low-rank approximation of the exact solution projected onto the same mesh with piecewise-linear elements.

Our implementation is written in C++ and uses the finite element library MFEM~\cite{mfem2024} and standard numerical routines. All experiments have been performed on a desktop computer (Mac mini M4, 16 GB RAM).

\subsection{Free transport on an interval (1d1v) \label{sub:1d1v}}

We conduct our first experiment in a one-dimensional spatial and velocity domain. The purpose of this experiment is to verify several properties of the rank-adaptive SAT integrator in a setting where the corresponding full forward Euler solution can be computed directly. In particular, we investigate consistency in the absence of truncation, the $L_2$ stability of the method, the influence of the truncation tolerances $\varepsilon_1$ and $\varepsilon_2$, and the scaling of the SAT error with the time-step size.

We consider the free-transport equation
\[
    \partial_t u + v\,\partial_x u = 0 \quad \textnormal{on } \quad \Omega^{(x)}\times\Omega^{(v)}=[-0.5,1.5]\times[1,3]
\]
for $t\in[0,0.2]$, with homogeneous inflow conditions. The initial condition is a smooth bump centered at $(x_0,v_0)=(0,2)$ with width $(\sigma_x,\sigma_v)=(0.2,0.5)$:
\begin{equation}\label{eq: bump function}
    u_0(x,v) = \Psi\left(\frac{x-x_0}{\sigma_x}\right) \Psi\left(\frac{v-v_0}{\sigma_v}\right), \qquad  \Psi(z)=
    \begin{cases}
        (1-z^2)^4, & \abs{z}<1,\\
        0,         & \abs{z}\geq 1.
    \end{cases}
\end{equation}

The domain is discretized using uniform meshes with $n^{(x)}=160$ and $n^{(v)}=128$ cells in the spatial and velocity domains, respectively, and use the time step $\tau=5\cdot10^{-4}$. The domain is sufficiently large that the support of the solution does not reach the phase-space boundary during the simulation. Hence, boundary inflow and outflow do not affect the experiment. For the interior faces, we use a groupwise Lax--Friedrichs flux: on all faces associated with the spatial direction we choose $\alpha_e^{\mathrm{LF}}=3$, whereas on all faces associated with the velocity direction we set $\alpha_e^{\mathrm{LF}}=0$, since the electric field vanishes. Thus, the Lax--Friedrichs parameter is constant within each group of interior faces. As throughout the paper, the upwind flux is used on boundary faces.

For the chosen discretization, the CFL threshold defined in \eqref{eq:CFL} is \(\tau_{\mathrm{CFL}}\approx 4.17 \cdot 10^{-3}\). Hence, the time step \(\tau=5\cdot10^{-4}\) corresponds to the normalized CFL number
\[
\nu_{\mathrm{CFL}}=\frac{\tau}{\tau_{\mathrm{CFL}}}\approx 0.12 <1.
\]

We first verify that the SAT scheme introduces no additional approximation error in the absence of truncation. To this end, we set both prescribed truncation tolerances to zero and compare the SAT iterates with the corresponding full forward Euler iterates, using the same initial data, spatial discretization, and time step. The difference is evaluated at every time step in the Frobenius norm. While no tolerance-based truncation is applied, we still truncate to the numerical rank by only keeping singular values satisfying
\[
\sigma_j > \epsilon_{\mathrm{mach}}\, s\, \sigma_1,
\qquad
s=\min(n,m),
\]
for an \(n\times m\) matrix, where \(\sigma_1\) denotes the largest singular value and $\epsilon_{\mathrm{mach}}$ is the machine precision. With \(\varepsilon_1=\varepsilon_2=0\), the maximum difference between SAT and full forward Euler iterates over all times was approximately \(1.2\cdot 10^{-13}\), confirming agreement up to accumulated floating-point errors.

To study the influence of the tolerances $\varepsilon_1$ and $\varepsilon_2$ on the accuracy of the rank-adaptive SAT integrator, we next perform three sets of simulations. First, we vary the truncation tolerance $\varepsilon_1$ of the right-hand side while keeping the final truncation tolerance fixed at $\varepsilon_2=10^{-13}$. Second, we vary $\varepsilon_2$ and fix $\varepsilon_1=10^{-7}$. Finally, we choose
\begin{equation}
    \label{eq:epsilon_C}
    \varepsilon_1=C\tau, \qquad \varepsilon_2=C\tau^2
\end{equation}
and vary the tolerance factor $C$. We compute the maximum $L_2$ difference between the rank-adaptive SAT solution and the full forward Euler solution over all time steps. The results are shown in Figure~\ref{fig:interval-tolerance-study}. In general, in all three sets of simulations, the measured difference between SAT and forward Euler remains below the corresponding theoretical upper bound from Theorem~\ref{thm:euler_difference}. Let us discuss the observed dependence on the tolerances in more detail.

\begin{figure}[t]
    \centering

    \begin{minipage}{.49\textwidth}
\pgfplotstableread{
epsilon_1 epsilon_1_error epsilon_1_bound epsilon_1_id epsilon_2 epsilon_2_error epsilon_2_bound epsilon_2_id
1.0000000000000001e-05 1.9552196334471304e-09 2.0000400000000002e-06 702 1.0000000000000001e-09 5.9335708604876633e-08 4.2000000000000006e-07 600
3.0000000000000001e-05 1.9846787498336158e-09 6.0000400000000004e-06 703 3.0000000000000000e-09 1.5681741463279879e-07 1.2200000000000000e-06 601
1.0000000000000000e-04 1.9846787498336158e-09 2.0000040000000001e-05 704 1.0000000000000000e-08 6.1206322399666580e-07 4.0199999999999996e-06 602
2.9999999999999997e-04 3.8646654722804442e-09 6.0000039999999998e-05 705 2.9999999999999997e-08 1.8357722775342768e-06 1.2019999999999998e-05 603
1.0000000000000000e-03 8.0879743759266117e-09 2.0000004000000001e-04 706 9.9999999999999995e-08 5.9772505402361088e-06 4.0020000000000000e-05 604
3.0000000000000001e-03 1.0670905876468194e-08 6.0000004000000000e-04 707 2.9999999999999999e-07 2.3450342926032901e-05 1.2001999999999999e-04 605
1.0000000000000000e-02 5.1713688311196162e-08 2.0000000399999999e-03 708 9.9999999999999995e-07 5.6203302109508095e-05 4.0001999999999995e-04 606
2.9999999999999999e-02 1.1590685429106026e-07 6.0000000400000000e-03 709 3.0000000000000001e-06 4.0776212661907345e-04 1.2000200000000002e-03 607
1.0000000000000001e-01 2.3540687231144897e-05 2.0000000040000004e-02 710 1.0000000000000001e-05 6.2489129557350288e-04 4.0000200000000000e-03 608
nan nan nan nan 3.0000000000000001e-05 3.2340116927617960e-03 1.2000020000000000e-02 609
nan nan nan nan 1.0000000000000000e-04 3.2340116927617960e-03 4.0000020000000004e-02 610
nan nan nan nan 2.9999999999999997e-04 2.2559287684886412e-02 1.2000002000000000e-01 611
nan nan nan nan 1.0000000000000000e-03 2.2559287684886412e-02 4.0000002000000001e-01 612
}\intervalTolTolFData
\begin{tikzpicture}[scale=.95]
  \begin{loglogaxis}[
    scale only axis,
    width=0.8\columnwidth,
    height=0.8\columnwidth,
    xlabel={Tolerance $\varepsilon_i$},
    ylabel={$\max_n \| U^n - \hat U^n\|_F$},
    grid=major,
    legend pos=north west,
    legend cell align=left,
    unbounded coords=discard,
  ]
    \addplot[
      blue,
      thick,
      mark=square*,
      mark options={draw=blue, fill=blue},
    ] table[
      x=epsilon_1,
      y=epsilon_1_error,
    ] {\intervalTolTolFData};
    \addlegendentry{$\varepsilon_1$}

    \addplot[
      blue,
      densely dotted,
      thick,
      mark=none,
      forget plot,
    ] table[
      x=epsilon_1,
      y=epsilon_1_bound,
    ] {\intervalTolTolFData};

    \addplot[
      red,
      thick,
      mark=*,
      mark options={draw=red, fill=red},
    ] table[
      x=epsilon_2,
      y=epsilon_2_error,
    ] {\intervalTolTolFData};
    \addlegendentry{$\varepsilon_2$}

    \addplot[
      red,
      densely dotted,
      thick,
      mark=none,
      forget plot,
    ] table[
      x=epsilon_2,
      y=epsilon_2_bound,
    ] {\intervalTolTolFData};
  \end{loglogaxis}
\end{tikzpicture}

    \end{minipage}
     \hfill
    \begin{minipage}{.49\textwidth}
\pgfplotstableread{
C error bound id
1.0000000000000000e-02 1.2392328279580821e-07 1.9999999999999999e-06 850
2.9999999999999999e-02 5.2378599359791249e-07 6.0000000000000002e-06 851
1.0000000000000001e-01 1.4309867603410792e-06 2.0000000000000005e-05 852
2.9999999999999999e-01 5.6945866508184847e-06 6.0000000000000002e-05 853
1.0000000000000000e+00 1.7751134696198433e-05 2.0000000000000001e-04 854
3.0000000000000000e+00 5.1688352785440947e-05 6.0000000000000006e-04 855
1.0000000000000000e+01 2.7745838398643283e-04 2.0000000000000000e-03 856
3.0000000000000000e+01 4.3628086178547012e-04 6.0000000000000001e-03 857
1.0000000000000000e+02 3.2333394070751514e-03 2.0000000000000000e-02 858
}\intervalCData
\begin{tikzpicture}[scale=.95]
  \begin{loglogaxis}[
    scale only axis,
    width=0.8\columnwidth,
    height=0.8\columnwidth,
    xlabel={Tolerance factor $C$},
    ylabel={$\max_n \| U^n - \hat U^n\|_F$},
    grid=major,
    unbounded coords=discard,
  ]
    \addplot[
      blue,
      thick,
      mark=square*,
      mark options={draw=blue, fill=blue},
    ] table[
      x=C,
      y=error,
    ] {\intervalCData};

    \addplot[
      blue,
      densely dotted,
      thick,
      mark=none,
      forget plot,
    ] table[
      x=C,
      y=bound,
    ] {\intervalCData};
  \end{loglogaxis}
\end{tikzpicture}

    \end{minipage}
    \caption{Maximum $L_2$ difference over all time steps between the rank-adaptive SAT solution and the full-rank forward Euler solution for the 1d1v experiment. Left: Difference obtained by varying~$\varepsilon_1$ while fixing $\varepsilon_2=10^{-13}$ (blue), and by varying $\varepsilon_2$ while fixing $\varepsilon_1=10^{-7}$ (red). Right: Difference obtained by varying the tolerance factor $C$, where $\varepsilon_1=C\tau$ and $\varepsilon_2=C\tau^2$. The dotted lines indicate the upper bound from Theorem~\ref{thm:euler_difference} on the respective tolerance parameter.}
    \label{fig:interval-tolerance-study}
\end{figure}

In the left panel of Figure~\ref{fig:interval-tolerance-study}, the error obtained by varying $\varepsilon_2$ exhibits a clear first-order dependence on the tolerance, in agreement with the scaling predicted by the theorem. The dependence on $\varepsilon_1$ is less pronounced. For larger values of $\varepsilon_1$, the error decreases as the tolerance is reduced, but remains substantially below the corresponding upper bound. For sufficiently small $\varepsilon_1$, the error levels off at approximately $10^{-9}$, so that further decreasing $\varepsilon_1$ does not lead to a noticeable reduction of the SAT--forward Euler difference. Thus, the $\varepsilon_1$ contribution of the theoretical bound appears to be rather pessimistic in this experiment.

The right panel shows the results for the coupled tolerance choice~\eqref{eq:epsilon_C}. The error again remains below the theoretical bound $2TC\tau$ for all considered values of $C$ and exhibits an approximately first-order dependence on $C$. This behavior is consistent with the error scaling predicted by Theorem~\ref{thm:euler_difference} for the coupled tolerance choice~\eqref{eq:epsilon_C}. Table~\ref{tab:interval_C} complements these results with the maximal and average numerical ranks. Both decrease as $C$ increases, as expected from the more aggressive truncation.

\begin{table}[tb]
    \centering
    \caption{Comparison of the SAT and full forward Euler solutions for 1d1v experiment with the coupled tolerance choice \(\varepsilon_1=C\tau\), \(\varepsilon_2=C\tau^2\), for different values of \(C\). Reported are the maximum SAT--forward Euler difference over all time steps, the theoretical bound \(2TC\tau\) from Theorem~\ref{thm:euler_difference}, the maximal \(r_{\max}\) and average \(r_{\mathrm{avg}}\) SAT rank.}
    \label{tab:interval_C}
\begingroup
\small
\setlength{\tabcolsep}{4pt}
\sisetup{
  detect-all,
  output-exponent-marker=\ensuremath{\mathrm{E}},
}
\begin{tabular}{@{}
  S[table-format=3.2]
  S[table-format=1.2e-1]
  S[table-format=1.2e-1]
  S[table-format=2.0]
  S[table-format=2.2]
@{}}
  \toprule
  {$C$}
  & {$\max_n \|U^n-\hat U^n\|_F$} & {theor.\ bound} & {$r_{\max}$} & {$r_{\mathrm{avg}}$} \\
  \midrule
  0.01 & 1.24e-7 & 2.00e-6 & 7 & 5.86 \\
  0.03 & 5.24e-7 & 6.00e-6 & 7 & 5.39 \\
  0.1 & 1.43e-6 & 2.00e-5 & 6 & 5.02 \\
  0.3 & 5.69e-6 & 6.00e-5 & 5 & 4.52 \\
  1 & 1.78e-5 & 2.00e-4 & 5 & 4.14 \\
  3 & 5.17e-5 & 6.00e-4 & 4 & 3.67 \\
  10 & 2.77e-4 & 2.00e-3 & 4 & 3.13 \\
  30 & 4.36e-4 & 6.00e-3 & 3 & 2.79 \\
  100 & 3.23e-3 & 2.00e-2 & 2 & 2.00 \\
  \bottomrule
\end{tabular}
\endgroup

\end{table}

We next verify the $L_2$ stability established in Theorem~\ref{thm:stability_low_rank}. We consider a representative SAT computation with \(C=10\). The left panel of Figure~\ref{fig:interval-stability-dt} shows the normalized discrete $L_2$ norm
\(
    \frac{\|U^n\|_F}{\|U^0\|_F}
\)
for the SAT and full forward Euler solutions. Both curves nearly coincide, and in both cases the norm is nonincreasing over the complete time interval, in agreement with Theorem~\ref{thm:stability_low_rank}. The maximal observed stepwise change is negative for both solutions, confirming the nonexpansive behavior predicted by the theorem.

\begin{figure}[t]
    \centering

    \begin{minipage}[t]{.49\textwidth}
        \vspace{0pt}
        \centering
\pgfplotstableread{
time norm_sat norm_fe
0.0000000000000000e+00 1.0000000000000000e+00 1.0000000000000000e+00
5.0000000000000001e-04 9.9892765185953680e-01 9.9892765185953647e-01
1.0000000000000000e-03 9.9786051019177879e-01 9.9786051019200650e-01
1.5000000000000000e-03 9.9679853197306312e-01 9.9679853197374524e-01
2.0000000000000000e-03 9.9574167476898534e-01 9.9574167477034359e-01
2.5000000000000001e-03 9.9468989671325725e-01 9.9468989671551356e-01
3.0000000000000001e-03 9.9364315648897228e-01 9.9364315649234469e-01
3.5000000000000001e-03 9.9260141331177598e-01 9.9260141331648455e-01
4.0000000000000001e-03 9.9156462691456515e-01 9.9156462692082026e-01
4.5000000000000005e-03 9.9053275753344772e-01 9.9053275754143999e-01
5.0000000000000010e-03 9.8950576589477601e-01 9.8950576590467665e-01
5.4999999999999997e-03 9.8848361320303901e-01 9.8848361321500045e-01
6.0000000000000001e-03 9.8746626112945490e-01 9.8746626114374347e-01
6.5000000000000006e-03 9.8645367180153298e-01 9.8645367181846499e-01
7.0000000000000010e-03 9.8544580779299462e-01 9.8544580781292423e-01
7.4999999999999997e-03 9.8444263211426830e-01 9.8444263213759020e-01
8.0000000000000002e-03 9.8344410820345507e-01 9.8344410823060302e-01
8.5000000000000006e-03 9.8245019991773919e-01 9.8245019994917915e-01
9.0000000000000011e-03 9.8146087152516825e-01 9.8146087156140605e-01
9.5000000000000015e-03 9.8047608769679362e-01 9.8047608773836736e-01
1.0000000000000000e-02 9.7949581349913273e-01 9.7949581354661508e-01
1.0500000000000001e-02 9.7852001438691549e-01 9.7852001444091918e-01
1.1000000000000001e-02 9.7754865619613363e-01 9.7754865625730314e-01
1.1500000000000000e-02 9.7658170523165444e-01 9.7658170520633236e-01
1.2000000000000000e-02 9.7561912797674111e-01 9.7561912786662930e-01
1.2500000000000001e-02 9.7466089137189915e-01 9.7466089117861321e-01
1.3000000000000001e-02 9.7370696271332591e-01 9.7370696243845556e-01
1.3500000000000002e-02 9.7275730964710239e-01 9.7275730929220705e-01
1.4000000000000000e-02 9.7181190016350383e-01 9.7181189973012994e-01
1.4500000000000001e-02 9.7087070259150976e-01 9.7087070208118431e-01
1.5000000000000001e-02 9.6993368559342552e-01 9.6993368500767929e-01
1.5500000000000000e-02 9.6900081815969530e-01 9.6900081750007838e-01
1.6000000000000000e-02 9.6807206960384129e-01 9.6807206887195185e-01
1.6500000000000001e-02 9.6714740955752909e-01 9.6714740875505434e-01
1.7000000000000001e-02 9.6622680796577787e-01 9.6622680709454978e-01
1.7500000000000002e-02 9.6531023508231195e-01 9.6531023414434503e-01
1.8000000000000002e-02 9.6439766146503580e-01 9.6439766046254871e-01
1.8500000000000003e-02 9.6348905797168638e-01 9.6348905690704090e-01
1.9000000000000000e-02 9.6258439575557542e-01 9.6258439463115464e-01
1.9500000000000000e-02 9.6168364626143732e-01 9.6168364507944704e-01
2.0000000000000000e-02 9.6078678122123751e-01 9.6078677998358764e-01
2.0500000000000001e-02 9.5989377265000664e-01 9.5989377135832143e-01
2.1000000000000001e-02 9.5900459284181860e-01 9.5900459149754891e-01
2.1500000000000002e-02 9.5811921436592851e-01 9.5811921297045965e-01
2.2000000000000002e-02 9.5723761006308350e-01 9.5723760861777762e-01
2.2499999999999999e-02 9.5635975304187970e-01 9.5635975154807107e-01
2.3000000000000000e-02 9.5548561667513932e-01 9.5548561513415209e-01
2.3500000000000000e-02 9.5461517459620326e-01 9.5461517300953069e-01
2.4000000000000000e-02 9.5374840069468680e-01 9.5374839906497411e-01
2.4500000000000001e-02 9.5288526911388249e-01 9.5288526744509694e-01
2.5000000000000001e-02 9.5202575425085589e-01 9.5202575254505584e-01
2.5500000000000002e-02 9.5116983074886174e-01 9.5116982900727798e-01
2.6000000000000002e-02 9.5031747349442719e-01 9.5031747171826386e-01
2.6500000000000003e-02 9.4946865761502508e-01 9.4946865580547157e-01
2.7000000000000000e-02 9.4862335847560342e-01 9.4862335663422115e-01
2.7500000000000000e-02 9.4778155167414047e-01 9.4778154980468909e-01
2.8000000000000001e-02 9.4694321304284590e-01 9.4694321114893798e-01
2.8500000000000001e-02 9.4610831864527722e-01 9.4610831672801032e-01
2.9000000000000001e-02 9.4527684476861940e-01 9.4527684282908531e-01
2.9500000000000002e-02 9.4444876792338583e-01 9.4444876596264005e-01
3.0000000000000002e-02 9.4362406484052708e-01 9.4362406285972955e-01
3.0499999999999999e-02 9.4280271246746405e-01 9.4280271046925745e-01
3.1000000000000000e-02 9.4198468796396129e-01 9.4198468595531881e-01
3.1500000000000000e-02 9.4116996871253333e-01 9.4116996669458008e-01
3.2000000000000001e-02 9.4035853230003763e-01 9.4035853027371985e-01
3.2500000000000001e-02 9.3955035652063357e-01 9.3955035448687485e-01
3.3000000000000002e-02 9.3874541937346379e-01 9.3874541733318073e-01
3.3500000000000002e-02 9.3794369906016217e-01 9.3794369701430580e-01
3.4000000000000002e-02 9.3714517398092900e-01 9.3714517193204994e-01
3.4500000000000003e-02 9.3634982272886691e-01 9.3634982068597405e-01
3.5000000000000003e-02 9.3555762410707155e-01 9.3555762207107873e-01
3.5500000000000004e-02 9.3476855710379991e-01 9.3476855507549772e-01
3.6000000000000004e-02 9.3398260089809759e-01 9.3398259887824653e-01
3.6500000000000005e-02 9.3319973485765173e-01 9.3319973284700752e-01
3.6999999999999998e-02 9.3241993853661642e-01 9.3241993653593125e-01
3.7499999999999999e-02 9.3164319167343201e-01 9.3164318968348570e-01
3.7999999999999999e-02 9.3086947418884047e-01 9.3086947221033611e-01
3.8500000000000000e-02 9.3009876618362497e-01 9.3009876421725979e-01
3.9000000000000000e-02 9.2933104793662225e-01 9.2933104598307936e-01
3.9500000000000000e-02 9.2856629990269335e-01 9.2856629796264745e-01
4.0000000000000001e-02 9.2780450271072190e-01 9.2780450078483834e-01
4.0500000000000001e-02 9.2704563716165655e-01 9.2704563525058459e-01
4.1000000000000002e-02 9.2628968422656111e-01 9.2628968233094655e-01
4.1500000000000002e-02 9.2553662504471668e-01 9.2553662316518881e-01
4.2000000000000003e-02 9.2478644092172690e-01 9.2478643905890812e-01
4.2500000000000003e-02 9.2403911332766653e-01 9.2403911148217177e-01
4.3000000000000003e-02 9.2329462389525763e-01 9.2329462206769075e-01
4.3500000000000004e-02 9.2255295441806184e-01 9.2255295260901482e-01
4.4000000000000004e-02 9.2181408684870170e-01 9.2181408505876072e-01
4.4499999999999998e-02 9.2107800329711054e-01 9.2107800152684893e-01
4.4999999999999998e-02 9.2034468602880426e-01 9.2034468427878879e-01
4.5499999999999999e-02 9.1961411746317057e-01 9.1961411573396012e-01
4.5999999999999999e-02 9.1888628017180451e-01 9.1888627846394599e-01
4.6500000000000000e-02 9.1816115687683619e-01 9.1816115519086883e-01
4.7000000000000000e-02 9.1743873044929769e-01 9.1743872878575472e-01
4.7500000000000001e-02 9.1671898390751827e-01 9.1671898226692250e-01
4.8000000000000001e-02 9.1600190041552809e-01 9.1600189879839433e-01
4.8500000000000001e-02 9.1528746328148969e-01 9.1528746168832620e-01
4.9000000000000002e-02 9.1457565595615509e-01 9.1457565438745980e-01
4.9500000000000002e-02 9.1386646203132749e-01 9.1386646048759357e-01
5.0000000000000003e-02 9.1315986523836790e-01 9.1315986372007829e-01
5.0500000000000003e-02 9.1245584944669911e-01 9.1245584795432988e-01
5.1000000000000004e-02 9.1175439866233776e-01 9.1175439719635876e-01
5.1500000000000004e-02 9.1105549702645638e-01 9.1105549558732957e-01
5.2000000000000005e-02 9.1035912881394032e-01 9.1035912740211966e-01
5.2500000000000005e-02 9.0966527843198985e-01 9.0966527704792066e-01
5.2999999999999999e-02 9.0897393041871821e-01 9.0897392906284513e-01
5.3499999999999999e-02 9.0828506944178444e-01 9.0828506811454113e-01
5.3999999999999999e-02 9.0759868029703705e-01 9.0759867899884850e-01
5.4500000000000000e-02 9.0691474790716631e-01 9.0691474663845362e-01
5.5000000000000000e-02 9.0623325732039151e-01 9.0623325608157101e-01
5.5500000000000001e-02 9.0555419370915735e-01 9.0555419250063585e-01
5.6000000000000001e-02 9.0487754236883322e-01 9.0487754119101504e-01
5.6500000000000002e-02 9.0420328871645450e-01 9.0420328756973767e-01
5.7000000000000002e-02 9.0353141828946126e-01 9.0353141717423346e-01
5.7500000000000002e-02 9.0286191674445881e-01 9.0286191566110330e-01
5.8000000000000003e-02 9.0219476985598368e-01 9.0219476880488192e-01
5.8500000000000003e-02 9.0152996351531278e-01 9.0152996249683803e-01
5.9000000000000004e-02 9.0086748372925463e-01 9.0086748274377293e-01
5.9500000000000004e-02 9.0020731661897269e-01 9.0020731566684686e-01
6.0000000000000005e-02 8.9954944841882045e-01 8.9954944750040877e-01
6.0499999999999998e-02 8.9889386547519412e-01 8.9889386459085008e-01
6.0999999999999999e-02 8.9824055424539684e-01 8.9824055339546549e-01
6.1499999999999999e-02 8.9758950129650816e-01 8.9758950048133102e-01
6.2000000000000000e-02 8.9694069330427906e-01 8.9694069252419628e-01
6.2500000000000000e-02 8.9629411705204076e-01 8.9629411630738476e-01
6.3000000000000000e-02 8.9564975942962177e-01 8.9564975872071639e-01
6.3500000000000001e-02 8.9500760743227026e-01 8.9500760675944002e-01
6.4000000000000001e-02 8.9436764815960912e-01 8.9436764752317366e-01
6.4500000000000002e-02 8.9372986881458416e-01 8.9372986821485867e-01
6.5000000000000002e-02 8.9309425670244214e-01 8.9309425613973481e-01
6.5500000000000003e-02 8.9246079922970067e-01 8.9246079870431738e-01
6.6000000000000003e-02 8.9182948390315420e-01 8.9182948341539781e-01
6.6500000000000004e-02 8.9120029832887071e-01 8.9120029787903687e-01
6.7000000000000004e-02 8.9057323021121260e-01 8.9057322979959574e-01
6.7500000000000004e-02 8.8994826735186672e-01 8.8994826697875795e-01
6.8000000000000005e-02 8.8932539764888185e-01 8.8932539731456728e-01
6.8500000000000005e-02 8.8870460909572713e-01 8.8870460880048774e-01
6.9000000000000006e-02 8.8808588978034464e-01 8.8808588952446221e-01
6.9500000000000006e-02 8.8746922788423532e-01 8.8746922766798064e-01
7.0000000000000007e-02 8.8685461168153334e-01 8.8685461150517919e-01
7.0500000000000007e-02 8.8624202953810827e-01 8.8624202940192587e-01
7.1000000000000008e-02 8.8563146991066988e-01 8.8563146981492236e-01
7.1500000000000008e-02 8.8502292134588589e-01 8.8502292129083338e-01
7.2000000000000008e-02 8.8441637247950611e-01 8.8441637246540628e-01
7.2500000000000009e-02 8.8381181203549408e-01 8.8381181206260473e-01
7.2999999999999995e-02 8.8320922882518960e-01 8.8320922889376352e-01
7.3499999999999996e-02 8.8260861174644878e-01 8.8260861185673545e-01
7.3999999999999996e-02 8.8200994978282288e-01 8.8200994993507054e-01
7.4499999999999997e-02 8.8141323200272770e-01 8.8141323219717582e-01
7.4999999999999997e-02 8.8081844755862715e-01 8.8081844779551621e-01
7.5499999999999998e-02 8.8022558568623577e-01 8.8022558596580680e-01
7.5999999999999998e-02 8.7963463570371070e-01 8.7963463602619762e-01
7.6499999999999999e-02 8.7904558701087698e-01 8.7904558737651350e-01
7.6999999999999999e-02 8.7845842908844396e-01 8.7845842949745956e-01
7.7499999999999999e-02 8.7787315149723255e-01 8.7787315194985827e-01
7.8000000000000000e-02 8.7728974387742353e-01 8.7728974437388130e-01
7.8500000000000000e-02 8.7670819594779514e-01 8.7670819648830822e-01
7.9000000000000001e-02 8.7612849750498123e-01 8.7612849808977566e-01
7.9500000000000001e-02 8.7555063842275216e-01 8.7555063905204045e-01
8.0000000000000002e-02 8.7497460865125631e-01 8.7497460932525950e-01
8.0500000000000002e-02 8.7440039821633475e-01 8.7440039893526544e-01
8.1000000000000003e-02 8.7382799721878957e-01 8.7382799798285848e-01
8.1500000000000003e-02 8.7325739583367823e-01 8.7325739664310009e-01
8.2000000000000003e-02 8.7268858430964336e-01 8.7268858516462300e-01
8.2500000000000004e-02 8.7212155296819516e-01 8.7212155386893886e-01
8.3000000000000004e-02 8.7155629220304798e-01 8.7155629314975980e-01
8.3500000000000005e-02 8.7099279247944716e-01 8.7099279347233205e-01
8.4000000000000005e-02 8.7043104433350316e-01 8.7043104537276006e-01
8.4500000000000006e-02 8.6987103837152546e-01 8.6987103945735533e-01
8.5000000000000006e-02 8.6931276526938728e-01 8.6931276640198774e-01
8.5500000000000007e-02 8.6875621577187867e-01 8.6875621695144267e-01
8.6000000000000007e-02 8.6820138069205055e-01 8.6820138191877583e-01
8.6500000000000007e-02 8.6764825091061648e-01 8.6764825218469632e-01
8.7000000000000008e-02 8.6709681737531108e-01 8.6709681869693445e-01
8.7500000000000008e-02 8.6654707110027696e-01 8.6654707246963270e-01
8.8000000000000009e-02 8.6599900316545408e-01 8.6599900458273571e-01
8.8499999999999995e-02 8.6545260471599494e-01 8.6545260618138553e-01
8.8999999999999996e-02 8.6490786696164124e-01 8.6490786847532619e-01
8.9499999999999996e-02 8.6436478117615489e-01 8.6436478273831940e-01
8.9999999999999997e-02 8.6382333869672800e-01 8.6382334030755448e-01
9.0499999999999997e-02 8.6328353092340648e-01 8.6328353258307711e-01
9.0999999999999998e-02 8.6274534931851954e-01 8.6274535102721583e-01
9.1499999999999998e-02 8.6220878540612245e-01 8.6220878716402227e-01
9.1999999999999998e-02 8.6167383077142179e-01 8.6167383257870522e-01
9.2499999999999999e-02 8.6114047706024344e-01 8.6114047891708934e-01
9.2999999999999999e-02 8.6060871597847055e-01 8.6060871788505233e-01
9.3500000000000000e-02 8.6007853929151146e-01 8.6007854124800687e-01
9.4000000000000000e-02 8.5954993882376218e-01 8.5954994083034386e-01
9.4500000000000001e-02 8.5902290645807877e-01 8.5902290851491925e-01
9.5000000000000001e-02 8.5849743413524993e-01 8.5849743624252450e-01
9.5500000000000002e-02 8.5797351385348508e-01 8.5797351601136307e-01
9.6000000000000002e-02 8.5745113766789782e-01 8.5745113987655086e-01
9.6500000000000002e-02 8.5693029769000451e-01 8.5693029994960024e-01
9.7000000000000003e-02 8.5641098608720823e-01 8.5641098839791607e-01
9.7500000000000003e-02 8.5589319508232031e-01 8.5589319744430903e-01
9.8000000000000004e-02 8.5537691695306184e-01 8.5537691936649973e-01
9.8500000000000004e-02 8.5486214403157523e-01 8.5486214649662917e-01
9.9000000000000005e-02 8.5434886870394455e-01 8.5434887122078151e-01
9.9500000000000005e-02 8.5383708340972408e-01 8.5383708597850727e-01
1.0000000000000001e-01 8.5332678064146239e-01 8.5332678326235745e-01
1.0050000000000001e-01 8.5281795294423535e-01 8.5281795561740736e-01
1.0100000000000001e-01 8.5231059291519495e-01 8.5231059564080469e-01
1.0150000000000001e-01 8.5180469320309926e-01 8.5180469598131014e-01
1.0200000000000001e-01 8.5130024650786762e-01 8.5130024933884541e-01
1.0250000000000001e-01 8.5079724558014047e-01 8.5079724846404414e-01
1.0300000000000001e-01 8.5029568322082538e-01 8.5029568615781459e-01
1.0350000000000001e-01 8.4979555228066117e-01 8.4979555527090178e-01
1.0400000000000001e-01 8.4929684565979646e-01 8.4929684870344491e-01
1.0450000000000001e-01 8.4879955630734538e-01 8.4879955940456409e-01
1.0500000000000000e-01 8.4830367722097588e-01 8.4830368037192205e-01
1.0550000000000000e-01 8.4780920144647987e-01 8.4780920465131393e-01
1.0600000000000000e-01 8.4731612207736495e-01 8.4731612533624767e-01
1.0650000000000000e-01 8.4682443225444037e-01 8.4682443556752640e-01
1.0700000000000000e-01 8.4633412516540629e-01 8.4633412853285861e-01
1.0750000000000000e-01 8.4584519404445846e-01 8.4584519746643416e-01
1.0800000000000000e-01 8.4535763217188675e-01 8.4535763564853972e-01
1.0850000000000000e-01 8.4487143287367195e-01 8.4487143640516238e-01
1.0900000000000000e-01 8.4438658952110823e-01 8.4438659310759290e-01
1.0950000000000000e-01 8.4390309553040743e-01 8.4390309917204187e-01
1.1000000000000000e-01 8.4342094436231774e-01 8.4342094805926227e-01
1.1050000000000000e-01 8.4294012952174724e-01 8.4294013327415507e-01
1.1100000000000000e-01 8.4246064455738634e-01 8.4246064836541645e-01
1.1150000000000000e-01 8.4198248306133949e-01 8.4198248692514621e-01
1.1200000000000000e-01 8.4150563866874961e-01 8.4150564258848848e-01
1.1250000000000000e-01 8.4103010505744180e-01 8.4103010903327180e-01
1.1300000000000000e-01 8.4055587594756564e-01 8.4055587997964076e-01
1.1350000000000000e-01 8.4008294510122716e-01 8.4008294918970416e-01
1.1400000000000000e-01 8.3961130632214676e-01 8.3961131046717874e-01
1.1450000000000000e-01 8.3914095345530004e-01 8.3914095765704477e-01
1.1500000000000000e-01 8.3867188038657992e-01 8.3867188464519304e-01
1.1550000000000001e-01 8.3820408104245059e-01 8.3820408535808577e-01
1.1600000000000001e-01 8.3773754938960798e-01 8.3773755376242320e-01
1.1650000000000001e-01 8.3727227943464144e-01 8.3727228386479269e-01
1.1700000000000001e-01 8.3680826522371232e-01 8.3680826971135247e-01
1.1750000000000001e-01 8.3634550084220105e-01 8.3634550538748964e-01
1.1800000000000001e-01 8.3588398041441381e-01 8.3588398501750705e-01
1.1850000000000001e-01 8.3542369810323103e-01 8.3542370276428113e-01
1.1900000000000001e-01 8.3496464810980187e-01 8.3496465282896781e-01
1.1950000000000001e-01 8.3450682467322779e-01 8.3450682945066512e-01
1.2000000000000001e-01 8.3405022207024326e-01 8.3405022690610520e-01
1.2050000000000000e-01 8.3359483461490758e-01 8.3359483950935198e-01
1.2100000000000000e-01 8.3314065665829928e-01 8.3314066161148703e-01
1.2150000000000000e-01 8.3268768258821468e-01 8.3268768760030176e-01
1.2200000000000000e-01 8.3223590682886250e-01 8.3223591190000279e-01
1.2250000000000000e-01 8.3178532384056569e-01 8.3178532897091517e-01
1.2300000000000000e-01 8.3133592811946067e-01 8.3133593330917843e-01
1.2350000000000000e-01 8.3088771419722129e-01 8.3088771944646389e-01
1.2400000000000000e-01 8.3044067664074905e-01 8.3044068194967535e-01
1.2450000000000000e-01 8.2999481005190479e-01 8.2999481542067532e-01
1.2500000000000000e-01 8.2955010906721105e-01 8.2955011449598259e-01
1.2550000000000000e-01 8.2910656835757446e-01 8.2910657384650333e-01
1.2600000000000000e-01 8.2866418262801234e-01 8.2866418817725707e-01
1.2650000000000000e-01 8.2822294661736684e-01 8.2822295222708742e-01
1.2700000000000000e-01 8.2778285509804050e-01 8.2778286076839580e-01
1.2750000000000000e-01 8.2734390287572390e-01 8.2734390860687579e-01
1.2800000000000000e-01 8.2690608478912508e-01 8.2690609058123377e-01
1.2850000000000000e-01 8.2646939570970712e-01 8.2646940156292981e-01
1.2900000000000000e-01 8.2603383054141799e-01 8.2603383645591633e-01
1.2950000000000000e-01 8.2559938422043833e-01 8.2559939019637540e-01
1.3000000000000000e-01 8.2516605171492630e-01 8.2516605775245988e-01
1.3050000000000000e-01 8.2473382802474671e-01 8.2473383412404300e-01
1.3100000000000001e-01 8.2430270818123674e-01 8.2430271434245539e-01
1.3150000000000001e-01 8.2387268724694362e-01 8.2387269347024572e-01
1.3200000000000001e-01 8.2344376031537270e-01 8.2344376660092600e-01
1.3250000000000001e-01 8.2301592251076039e-01 8.2301592885872543e-01
1.3300000000000001e-01 8.2258916898780809e-01 8.2258917539834953e-01
1.3350000000000001e-01 8.2216349493145824e-01 8.2216350140473748e-01
1.3400000000000001e-01 8.2173889555664381e-01 8.2173890209282696e-01
1.3450000000000001e-01 8.2131536610805922e-01 8.2131537270731014e-01
1.3500000000000001e-01 8.2089290185991937e-01 8.2089290852240671e-01
1.3550000000000001e-01 8.2047149811573461e-01 8.2047150484162157e-01
1.3600000000000001e-01 8.2005115020807495e-01 8.2005115699752851e-01
1.3650000000000001e-01 8.1963185349834311e-01 8.1963186035153135e-01
1.3700000000000001e-01 8.1921360337655558e-01 8.1921361029364714e-01
1.3750000000000001e-01 8.1879639526110481e-01 8.1879640224226524e-01
1.3800000000000001e-01 8.1838022459855164e-01 8.1838023164394968e-01
1.3850000000000001e-01 8.1796508686339842e-01 8.1796509397320116e-01
1.3900000000000001e-01 8.1755097755786932e-01 8.1755098473224841e-01
1.3950000000000001e-01 8.1713789221170607e-01 8.1713789945083082e-01
1.4000000000000001e-01 8.1672582638194580e-01 8.1672583368598772e-01
1.4050000000000001e-01 8.1631477565270416e-01 8.1631478302183091e-01
1.4100000000000001e-01 8.1590473563497634e-01 8.1590474306936212e-01
1.4150000000000001e-01 8.1549570196643117e-01 8.1549570946624805e-01
1.4200000000000002e-01 8.1508767031119089e-01 8.1508767787661018e-01
1.4250000000000002e-01 8.1468063635964350e-01 8.1468064399084050e-01
1.4300000000000002e-01 8.1427459582823147e-01 8.1427460352537839e-01
1.4350000000000002e-01 8.1386954445925641e-01 8.1386955222252277e-01
1.4400000000000002e-01 8.1346547802067071e-01 8.1346548585023781e-01
1.4450000000000002e-01 8.1306239230589794e-01 8.1306240020193710e-01
1.4499999999999999e-01 8.1266028313362548e-01 8.1266029109631377e-01
1.4549999999999999e-01 8.1225914634761986e-01 8.1225915437712992e-01
1.4599999999999999e-01 8.1185897781652172e-01 8.1185898591303507e-01
1.4649999999999999e-01 8.1145977343367715e-01 8.1145978159736987e-01
1.4699999999999999e-01 8.1106152911693408e-01 8.1106153734798270e-01
1.4749999999999999e-01 8.1066424080846045e-01 8.1066424910704449e-01
1.4799999999999999e-01 8.1026790447456676e-01 8.1026791284086552e-01
1.4849999999999999e-01 8.0987251610551125e-01 8.0987252453970504e-01
1.4899999999999999e-01 8.0947807171533004e-01 8.0947808021759826e-01
1.4949999999999999e-01 8.0908456734165279e-01 8.0908457591217664e-01
1.4999999999999999e-01 8.0869199904552647e-01 8.0869200768449012e-01
1.5049999999999999e-01 8.0830036291124141e-01 8.0830037161882162e-01
1.5100000000000000e-01 8.0790965504614431e-01 8.0790966382252793e-01
1.5150000000000000e-01 8.0751987158048810e-01 8.0751988042585809e-01
1.5200000000000000e-01 8.0713100866724441e-01 8.0713101758178252e-01
1.5250000000000000e-01 8.0674306248193084e-01 8.0674307146582314e-01
1.5300000000000000e-01 8.0635602922245730e-01 8.0635603827588742e-01
1.5350000000000000e-01 8.0596990510894795e-01 8.0596991423210396e-01
1.5400000000000000e-01 8.0558468638358083e-01 8.0558469557664680e-01
1.5450000000000000e-01 8.0520036931041916e-01 8.0520037857358251e-01
1.5500000000000000e-01 8.0481695017525767e-01 8.0481695950870658e-01
1.5550000000000000e-01 8.0443442528544951e-01 8.0443443468937226e-01
1.5600000000000000e-01 8.0405279096976212e-01 8.0405280044434946e-01
1.5650000000000000e-01 8.0367204357820865e-01 8.0367205312364565e-01
1.5700000000000000e-01 8.0329217948188858e-01 8.0329218909836853e-01
1.5750000000000000e-01 8.0291319507284165e-01 8.0291320476055505e-01
1.5800000000000000e-01 8.0253508676389373e-01 8.0253509652302979e-01
1.5850000000000000e-01 8.0215785098849202e-01 8.0215786081924434e-01
1.5900000000000000e-01 8.0178148420056883e-01 8.0178149410313149e-01
1.5950000000000000e-01 8.0140598287438325e-01 8.0140599284894365e-01
1.6000000000000000e-01 8.0103134350437155e-01 8.0103135355113053e-01
1.6050000000000000e-01 8.0065756260500986e-01 8.0065757272416038e-01
1.6100000000000000e-01 8.0028463671066119e-01 8.0028464690239298e-01
1.6150000000000000e-01 7.9991256237542097e-01 7.9991257263993298e-01
1.6200000000000001e-01 7.9954133617299616e-01 7.9954134651048481e-01
1.6250000000000001e-01 7.9917095469654265e-01 7.9917096510720553e-01
1.6300000000000001e-01 7.9880141466785115e-01 7.9880142504257279e-01
1.6350000000000001e-01 7.9843271260828275e-01 7.9843272294823364e-01
1.6400000000000001e-01 7.9806484516854181e-01 7.9806485547488437e-01
1.6450000000000001e-01 7.9769780901823384e-01 7.9769781929211281e-01
1.6500000000000001e-01 7.9733160084572996e-01 7.9733161108827655e-01
1.6550000000000001e-01 7.9696621735802287e-01 7.9696622757035895e-01
1.6600000000000001e-01 7.9660165528062021e-01 7.9660166546384403e-01
1.6650000000000001e-01 7.9623791135736899e-01 7.9623792151257866e-01
1.6700000000000001e-01 7.9587498235036913e-01 7.9587499247863835e-01
1.6750000000000001e-01 7.9551286503980634e-01 7.9551287514220603e-01
1.6800000000000001e-01 7.9515155622385314e-01 7.9515156630142614e-01
1.6850000000000001e-01 7.9479105271851413e-01 7.9479106277230549e-01
1.6900000000000001e-01 7.9443135135751841e-01 7.9443136138855741e-01
1.6950000000000001e-01 7.9407244899218454e-01 7.9407245900148316e-01
1.7000000000000001e-01 7.9371434249130168e-01 7.9371435247986488e-01
1.7050000000000001e-01 7.9335702874099556e-01 7.9335703870981189e-01
1.7100000000000001e-01 7.9300050464461802e-01 7.9300051459466614e-01
1.7150000000000001e-01 7.9264476712262122e-01 7.9264477705486269e-01
1.7200000000000001e-01 7.9228981311242563e-01 7.9228982302782403e-01
1.7250000000000001e-01 7.9193563956832336e-01 7.9193564946781470e-01
1.7300000000000001e-01 7.9158224346133932e-01 7.9158225334585441e-01
1.7350000000000002e-01 7.9122962177912126e-01 7.9122963164958149e-01
1.7400000000000002e-01 7.9087777152582428e-01 7.9087778138313647e-01
1.7450000000000002e-01 7.9052668972199114e-01 7.9052669956705734e-01
1.7500000000000002e-01 7.9017637340444746e-01 7.9017638323814943e-01
1.7550000000000002e-01 7.8982681962617307e-01 7.8982682944939064e-01
1.7600000000000002e-01 7.8947802545620516e-01 7.8947803526980098e-01
1.7649999999999999e-01 7.8912998797950962e-01 7.8912999778433945e-01
1.7699999999999999e-01 7.8878270429689012e-01 7.8878271409379397e-01
1.7749999999999999e-01 7.8843617152485945e-01 7.8843618131467152e-01
1.7799999999999999e-01 7.8809038679553711e-01 7.8809039657908098e-01
1.7849999999999999e-01 7.8774534725655265e-01 7.8774535703464255e-01
1.7899999999999999e-01 7.8740105007091954e-01 7.8740105984435749e-01
1.7949999999999999e-01 7.8705749241694023e-01 7.8705750218652126e-01
1.7999999999999999e-01 7.8671467148810437e-01 7.8671468125460653e-01
1.8049999999999999e-01 7.8637258449297054e-01 7.8637259425716777e-01
1.8099999999999999e-01 7.8603122865507369e-01 7.8603123841773281e-01
1.8149999999999999e-01 7.8569060121282397e-01 7.8569061097469339e-01
1.8200000000000000e-01 7.8535069941939106e-01 7.8535070918121574e-01
1.8250000000000000e-01 7.8501152054261714e-01 7.8501153030513715e-01
1.8300000000000000e-01 7.8467306186491503e-01 7.8467307162885447e-01
1.8350000000000000e-01 7.8433532068315914e-01 7.8433533044923454e-01
1.8400000000000000e-01 7.8399829430859880e-01 7.8399830407751980e-01
1.8450000000000000e-01 7.8366198006675514e-01 7.8366198983921631e-01
1.8500000000000000e-01 7.8332637529731663e-01 7.8332638507401076e-01
1.8550000000000000e-01 7.8299147735405628e-01 7.8299148713566680e-01
1.8600000000000000e-01 7.8265728360473319e-01 7.8265729339192691e-01
1.8650000000000000e-01 7.8232379143098429e-01 7.8232380122443057e-01
1.8700000000000000e-01 7.8199099822825047e-01 7.8199100802860544e-01
1.8750000000000000e-01 7.8165890140567529e-01 7.8165891121358400e-01
1.8800000000000000e-01 7.8132749838600157e-01 7.8132750820210395e-01
1.8850000000000000e-01 7.8099678660549821e-01 7.8099679643042963e-01
1.8900000000000000e-01 7.8066676351386366e-01 7.8066677334824186e-01
1.8950000000000000e-01 7.8033742657412031e-01 7.8033743641856268e-01
1.9000000000000000e-01 7.8000877326255391e-01 7.8000878311766342e-01
1.9050000000000000e-01 7.7968080106859217e-01 7.7968081093497477e-01
1.9100000000000000e-01 7.7935350749474952e-01 7.7935351737299574e-01
1.9150000000000000e-01 7.7902689005651549e-01 7.7902689994720664e-01
1.9200000000000000e-01 7.7870094628227826e-01 7.7870095618599322e-01
1.9250000000000000e-01 7.7837567371323824e-01 7.7837568363054555e-01
1.9300000000000000e-01 7.7805106990332307e-01 7.7805107983478483e-01
1.9350000000000001e-01 7.7772713241909719e-01 7.7772714236526819e-01
1.9400000000000001e-01 7.7740385883969343e-01 7.7740386880111823e-01
1.9450000000000001e-01 7.7708124675670587e-01 7.7708125673392658e-01
1.9500000000000001e-01 7.7675929377413067e-01 7.7675930376767988e-01
1.9550000000000001e-01 7.7643799750827691e-01 7.7643800751868119e-01
1.9600000000000001e-01 7.7611735558767037e-01 7.7611736561545530e-01
1.9650000000000001e-01 7.7579736565300450e-01 7.7579737569867657e-01
1.9700000000000001e-01 7.7547802535702914e-01 7.7547803542109306e-01
1.9750000000000001e-01 7.7515933236448697e-01 7.7515934244744167e-01
1.9800000000000001e-01 7.7484128435203325e-01 7.7484129445437533e-01
1.9850000000000001e-01 7.7452387900815955e-01 7.7452388913037284e-01
1.9900000000000001e-01 7.7420711403311115e-01 7.7420712417567217e-01
1.9950000000000001e-01 7.7389098713880966e-01 7.7389099730219735e-01
2.0000000000000001e-01 7.7357549604878895e-01 7.7357550623346671e-01
}\intervalNormData
\begin{tikzpicture}[scale=.95]
  \begin{axis}[
    scale only axis,
    width=0.8\columnwidth,
    height=0.8\columnwidth,
    xmin=0,
    xmax=0.2,
    xtick={0,0.05,0.10,0.15,0.20},
    scaled x ticks=false,
    xticklabel style={
      /pgf/number format/fixed,
      /pgf/number format/precision=2,
      /pgf/number format/fixed zerofill=false,
    },
    xlabel={Time $t$},
    ylabel={$\|U^n\|_F/\|U^0\|_F$},
    grid=major,
    legend pos=north east,
    legend cell align=left,
  ]
    \addplot[
      blue,
      thick,
      mark=none,
    ] table[
      x=time,
      y=norm_sat,
    ] {\intervalNormData};
    \addlegendentry{SAT}

    \addplot[
      red,
      densely dashed,
      thick,
      mark=none,
    ] table[
      x=time,
      y=norm_fe,
    ] {\intervalNormData};
    \addlegendentry{Full FE}
  \end{axis}
\end{tikzpicture}

    \end{minipage}
     \hfill
    \begin{minipage}[t]{.49\textwidth}
        \vspace{0pt}
        \centering
\pgfplotstableread{
tau error bound id
1.2500000000000000e-04 4.8186117523180609e-05 5.0000000000000001e-04 910
2.5000000000000001e-04 7.5911603946086852e-05 1.0000000000000000e-03 911
5.0000000000000001e-04 2.7745838398643283e-04 2.0000000000000000e-03 912
1.0000000000000000e-03 4.4261580906066567e-04 4.0000000000000001e-03 913
2.0000000000000000e-03 5.7436220152604870e-04 8.0000000000000002e-03 914
4.0000000000000001e-03 2.9633543015556308e-03 1.6000000000000000e-02 915
}\intervalDtData
\begin{tikzpicture}[scale=.95]
  \begin{loglogaxis}[
    scale only axis,
    width=0.8\columnwidth,
    height=0.8\columnwidth,
    xtick={1.25e-4,2.5e-4,5e-4,1e-3,2e-3,4e-3},
    xticklabels={
      {$1.25\cdot10^{-4}$},
      {$2.5\cdot10^{-4}$},
      {$5\cdot10^{-4}$},
      {$10^{-3}$},
      {$2\cdot10^{-3}$},
      {$4\cdot10^{-3}$}
    },
    xticklabel style={rotate=45, anchor=north east},
    xlabel={Time-step size $\tau$},
    ylabel={$\max_n \|U^n-\hat U^n\|_F$},
    grid=major,
    unbounded coords=discard,
  ]
    \addplot[
      blue,
      thick,
      mark=square*,
      mark options={draw=blue, fill=blue},
    ] table[
      x=tau,
      y=error,
    ] {\intervalDtData};

    \addplot[
      blue,
      densely dotted,
      thick,
      mark=none,
      forget plot,
    ] table[
      x=tau,
      y=bound,
    ] {\intervalDtData};
  \end{loglogaxis}
\end{tikzpicture}

    \end{minipage}

    \caption{Representative study for the 1d1v problem with fixed spatial discretization and truncation tolerances \(\varepsilon_1=C\tau\), \(\varepsilon_2=C\tau^2\), using \(C=10\). Left: Normalized $L_2$ norm for the SAT solution and for the corresponding full forward Euler solution. Both norms are nonincreasing over the complete time interval, in agreement with Theorem~\ref{thm:stability_low_rank}.
    Right: Maximum SAT--forward Euler difference over all time steps for varying time steps $\tau$, including the upper bound from Theorem~\ref{thm:euler_difference} (dotted).}
    \label{fig:interval-stability-dt}
\end{figure}

We finally investigate the dependence of the SAT accuracy on the time-step size $\tau$ for the same representative experiment $C=10$. For every value of $\tau$, the SAT solution is compared with the full forward Euler solution computed with the same time step, and we report the maximum difference over all time levels. Since the initial value is represented exactly, Theorem~\ref{thm:euler_difference} yields
\[
\max_n \|U^n-\hat U^n\|_F \le T\varepsilon_1+\frac{T}{\tau}\varepsilon_2 =2TC\tau.
\]
For $T=0.2$ and $C=10$, this gives the upper bound $4\tau$, shown in the right panel of Figure~\ref{fig:interval-stability-dt}. The measured errors remain below this bound for all considered time-step sizes and decrease as the time step is refined. While the individual refinement ratios do not exhibit a uniform asymptotic order, the results are consistent with the linear-in-$\tau$ perturbation estimate of Theorem~\ref{thm:euler_difference}. Moreover, the bound appears rather conservative, with the observed errors remaining noticeably smaller than the theoretical estimate throughout the experiment.

We note that the orthogonal projections and SVD truncations employed by the SAT method do not, in general, preserve entrywise nonnegativity or the discrete conservation properties of the underlying FV scheme. For the representative computation with $C=10$, the minimum value of~$u_h$ observed over all cells and time steps is $u_{\min}=-6.99\cdot10^{-7}$, whereas the full forward Euler solution remains nonnegative. The maximum relative deviation of the mass from its initial value is $8.02\cdot10^{-5}$ for the SAT solution, compared with $1.01\cdot10^{-14}$ for the full forward Euler solution. Thus, in this example violations of positivity and mass conservation are present but remain small. As this work focuses on $L_2$ stability and control of the low-rank truncation error, positivity- and conservation-preserving modifications are left for future work.

\subsection{Transport on a triangular-shaped spatial domain (2d2v) \label{sub:2d2v}}

In our second experiment, we consider a more challenging transport problem on a two-dimensional spatial domain with nonvanishing inflow. To demonstrate the geometric flexibility of the method, we choose a triangular spatial domain and discretize it using an unstructured triangular mesh. On the coarsest mesh, we compute both the SAT solution and the corresponding full forward Euler solution, which allows us to assess the effect of the low-rank approximation for different truncation tolerances. We then consider a sequence of uniformly refined meshes to investigate the method at larger problem sizes. On these meshes, computing the full forward Euler solution becomes prohibitively expensive. Instead, we estimate the error of the SAT solution with respect to the exact solution in the continuous $L_2$ norm.

To describe the experiment more precisely, we consider the Vlasov boundary problem~\eqref{eq:vp} on the triangular spatial domain $\Omega^{(x)}$ with vertices $(-0.5,-0.5)$, $(-0.5,0.5)$, and $(0.5,0.5)$. The velocity domain is chosen as $\Omega^{(v)}=[1,3]\times[-2,1]$, which will turn out sufficiently large that the support of the solution does not reach the velocity boundary during the simulation. We consider the time interval $[0,T]$ with $T=0.5$, use the constant electric field $\bm E=(0,1)^\top$, and prescribe zero as initial condition. The inflow data are constructed from a function $\bar u(t,\bm x,\bm v)$ solving the same equation on an unbounded spatial domain. Its initial condition is given by
\[
    \bar u_0(\bm x, \bm v) = \prod_{i=1}^2 \Psi\Bigl(\frac{x_i-x^{(0)}_i}{\sigma_x}\Bigr) \, 
        \Psi\Bigl(\frac{v_i-v^{(0)}_i}{\sigma_v}\Bigr)
\]
with $\bm x^{(0)}=(-0.8,0.2)^\top$, $\sigma_x=0.3   $, $\bm v^{(0)}=(2,0)^\top$, $\sigma_v=0.5$, and $\Psi$ denotes the bump function defined in~\eqref{eq: bump function}. Thus, the compact support of $\bar u_0$ is located outside the interior of the spatial domain $\Omega^{(x)}$ and touches its inflow boundary. This setting is similar to~\cite{uschmajew2024}. By the method of characteristics, we obtain
\[
    \bar u(t,\bm x, \bm v) = \bar u_0(\bm x - t \bm v - \frac{t^2}{2} \bm E, \bm v + t \bm E).
\]
Now this function serves as the inflow condition for the problem on the triangular domain,~i.e.
\[
    u(t, \bm x, \bm v) = \bar u(t, \bm x, \bm v), \qquad (\bm x, \bm v) \in \Gamma^-(t).
\] 
Consequently, the restriction of $\bar u$ to $\Omega^{(x)}\times\Omega^{(v)}$ is the exact solution of the problem on the triangular domain and will later serve as the reference solution for the error estimation.

At the coarsest level, we use an unstructured triangular mesh for the spatial domain $\Omega^{(x)}$ with $2\,320$ triangles. The velocity domain is discretized using a uniform Cartesian grid with $64\times 96$ cells. For the coarsest mesh, the CFL threshold defined in \eqref{eq:CFL} is $\tau_{\mathrm{CFL}} \approx 1.1\times 10^{-3}$. We choose $\tau=10^{-3}$, corresponding to a normalized CFL number of $\nu_{\mathrm{CFL}}=\tau/\tau_{\mathrm{CFL}}\approx0.91 < 1$. From one refinement level to the next, we uniformly refine both the spatial and velocity meshes and halve the time step, keeping the normalized CFL number constant.

For velocity faces, we use the upwind flux. Since the Cartesian velocity mesh has only coordinate-aligned face normals, the corresponding upwind operators admit a short tensorized representation. For the spatial faces, we use a groupwise Lax--Friedrichs flux. Such a grouping provides less dissipative stabilization than a single global Lax--Friedrichs parameter while retaining a short tensorized representation of the spatial flux operator. For this purpose, we partition the range of velocity magnitudes into $N$ intervals with endpoints
\[
    \min_{\bm v \in \Omega^{(v)}} \abs{\bm v} = v_0 < v_1 < \ldots < v_N = \max_{\bm v \in \Omega^{(v)}} \abs{\bm v},
\]
and partition the discrete phase space mesh according to 
\[
    \mathcal T = \bigcup_{k=1}^N \mathcal T_k, \qquad 
    \mathcal T_k = \mathcal T^{(x)} \times \mathcal T_{k}^{(v)}, \qquad 
    \mathcal T_{k}^{(v)} = \{ K^{(v)} \in \mathcal T^{(v)} \colon v_{k-1} < \lvert \bm v(K^{(v)}) \rvert \le v_{k} \}.
\]
Here, $\bm v(K^{(v)})$ denotes the center of the velocity cell $K^{(v)}$. For each group $\mathcal T_k$, we use a Lax--Friedrichs flux with
\[
    \alpha_k^{(x)}= \max_{K^{(v)}\in\mathcal T_k^{(v)}}\max_{\bm v\in K^{(v)}} \lvert \bm v \rvert.
\]
In the simulations, we use $N=4$ equally spaced intervals. 

To apply our method, we require the inflow to be a sum of products as in~\eqref{eq: separated g}. The exact inflow~$\bar u$ does not in general have the required space--velocity separated form. However, it factorizes as
\[
    \bar u(t, \bm x, \bm v) = \bar u_1(t, x_1, v_1) \bar u_2(t, x_2, v_2).
\]
This factorization still couples each spatial coordinate $x_i$ with the corresponding velocity coordinate~$v_i$. On the $(x_1,v_1)$ and $(x_2,v_2)$ domains, we use regular grids and truncated SVDs to compute low-rank approximations of $\bar u_1$ and $\bar u_2$. Combining these factors yields a space--velocity separated approximation $g$ of the inflow data.

\begin{table}[t]
    \centering
    \caption{Comparison of SAT and full forward Euler solutions for the coarsest 2d2v discretization and the coupled tolerance choice \(\varepsilon_1=C\tau\), \(\varepsilon_2=C\tau^2\), for different values of \(C\). Reported are the maximum SAT--forward Euler difference over all time steps, the theoretical bound \(2TC\tau\) from Theorem~\ref{thm:euler_difference}, the maximum SAT rank \(r_{\max}\), the SAT runtime \(t_{\mathrm{SAT}}\), and the runtime ratio \(t_{\mathrm{FE}}/t_{\mathrm{SAT}}\).}
    \label{tab:triangle_C}
\begingroup
\small
\setlength{\tabcolsep}{3pt}
\sisetup{
  detect-all,
  output-exponent-marker=\ensuremath{\mathrm{E}},
}
\begin{tabular}{@{}
  S[table-format=3.0]
  S[table-format=1.2e-1]
  S[table-format=1.2e-1]
  S[table-format=3.0]
  S[table-format=1.2e2]
  S[table-format=3.2]
@{}}
  \toprule
  {$C$}
  & {$\max_n\|U^n-\hat U^n\|_F$} & {theor.\ bound} & {$r_{\max}$} & {$t_{\mathrm{SAT}}\,[\mathrm{s}]$} & {$t_{\mathrm{FE}}/t_{\mathrm{SAT}}$} \\
  \midrule
  1 & 2.01e-5 & 1.00e-3 & 19 & 2.99e2 & 5.78 \\
  10 & 3.03e-4 & 1.00e-2 & 9 & 1.10e2 & 14.78 \\
  100 & 3.07e-3 & 1.00e-1 & 3 & 2.93e1 & 48.61 \\
  \bottomrule
\end{tabular}
\endgroup

\end{table}

We first study the perturbation result of Theorem~\ref{thm:euler_difference}. As in the 1d1v experiment, we investigate the SAT--forward Euler difference for different choices of the tolerance parameter $C$. We use $C=1,\,10,\,100$ and the tolerances given by~\eqref{eq:epsilon_C}. The results are summarized in Table~\ref{tab:triangle_C}. We observe that the maximum difference over all time steps remains below the theoretical bound~$2TC\tau$ for all choices of $C$. As $C$ increases, the error increases, while the maximal SAT rank $r_{\mathrm{max}}$ decreases due to the more aggressive truncation. Accordingly, the SAT runtime $t_{\mathrm{SAT}}$ decreases substantially. The measured runtime ratio $t_{\mathrm{FE}}/t_{\mathrm{SAT}}$ increases from $5.87$ for $C=1$ to $48.99$ for $C=100$, illustrating the tradeoff between approximation accuracy and computational cost.

For illustration, Figure~\ref{fig:triangle-evolution} shows the computed spatial density
\[
    \rho_h(t,\bm x) = \int_{\Omega^{(v)}} u_h(t,\bm x, \bm v) \, \mathrm d \bm v
\]
on the coarsest mesh for $C=1$ at various times. We observe that the density distribution enters the domain through the left boundary and is transported across the domain while being deflected downward by the constant electric field. Eventually, the solution leaves the domain through the lower boundary.

\begin{figure}[t]
    \centering
    \begin{subfigure}[t]{0.35\textwidth}
        \centering
        \includegraphics[width=\linewidth]{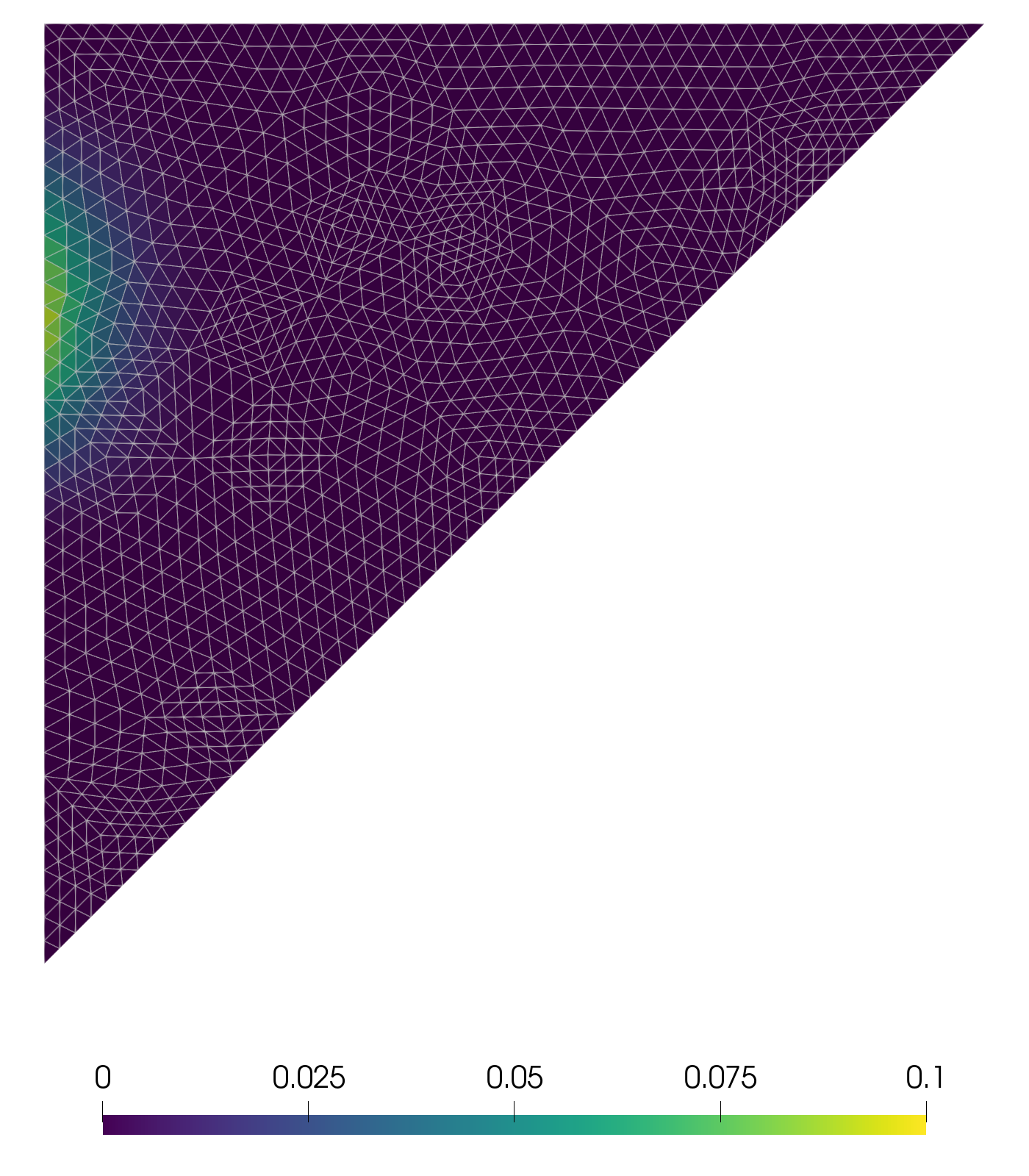}
        \caption{$t=0.1$}
        \label{fig:triangle-0p1}
    \end{subfigure}
    \qquad\qquad\qquad
    \begin{subfigure}[t]{0.35\textwidth}
        \centering
        \includegraphics[width=\linewidth]{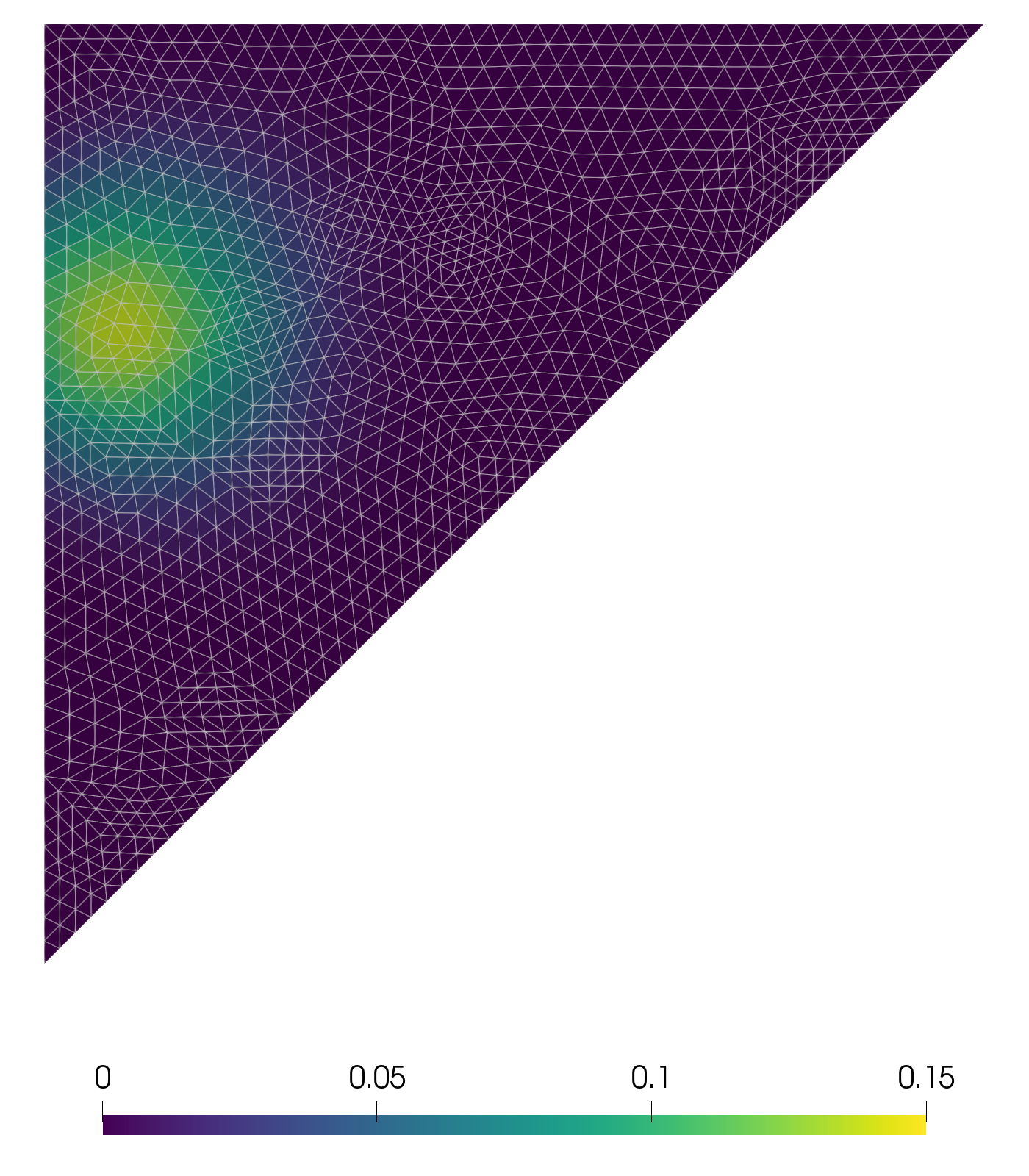}
        \caption{$t=0.2$}
        \label{fig:triangle-0p2}
    \end{subfigure}

    \medskip

    \begin{subfigure}[t]{0.35\textwidth}
        \centering
        \includegraphics[width=\linewidth]{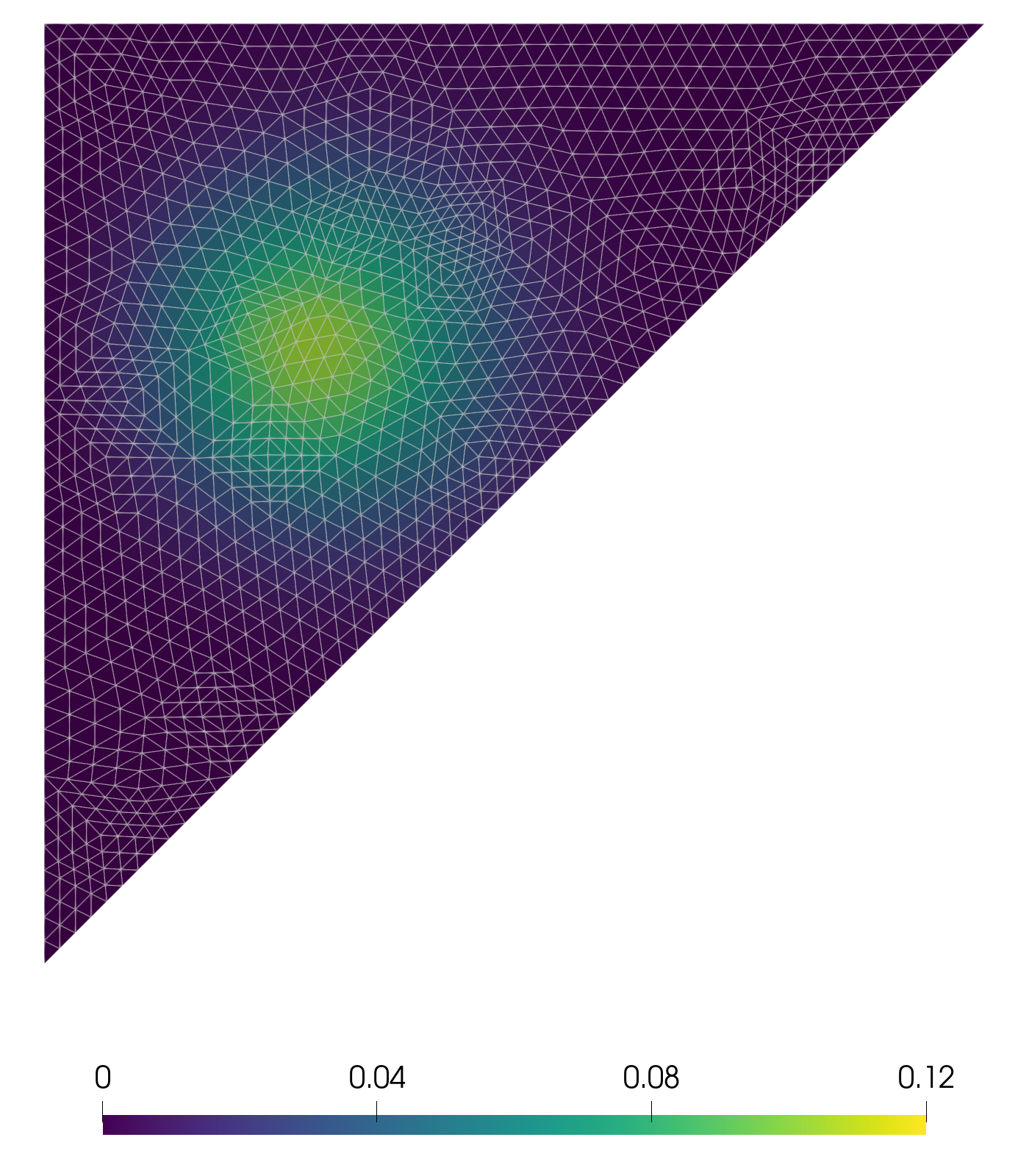}
        \caption{$t=0.3$}
        \label{fig:triangle-0p3}
    \end{subfigure}
    \qquad\qquad\qquad
    \begin{subfigure}[t]{0.35\textwidth}
        \centering
        \includegraphics[width=\linewidth]{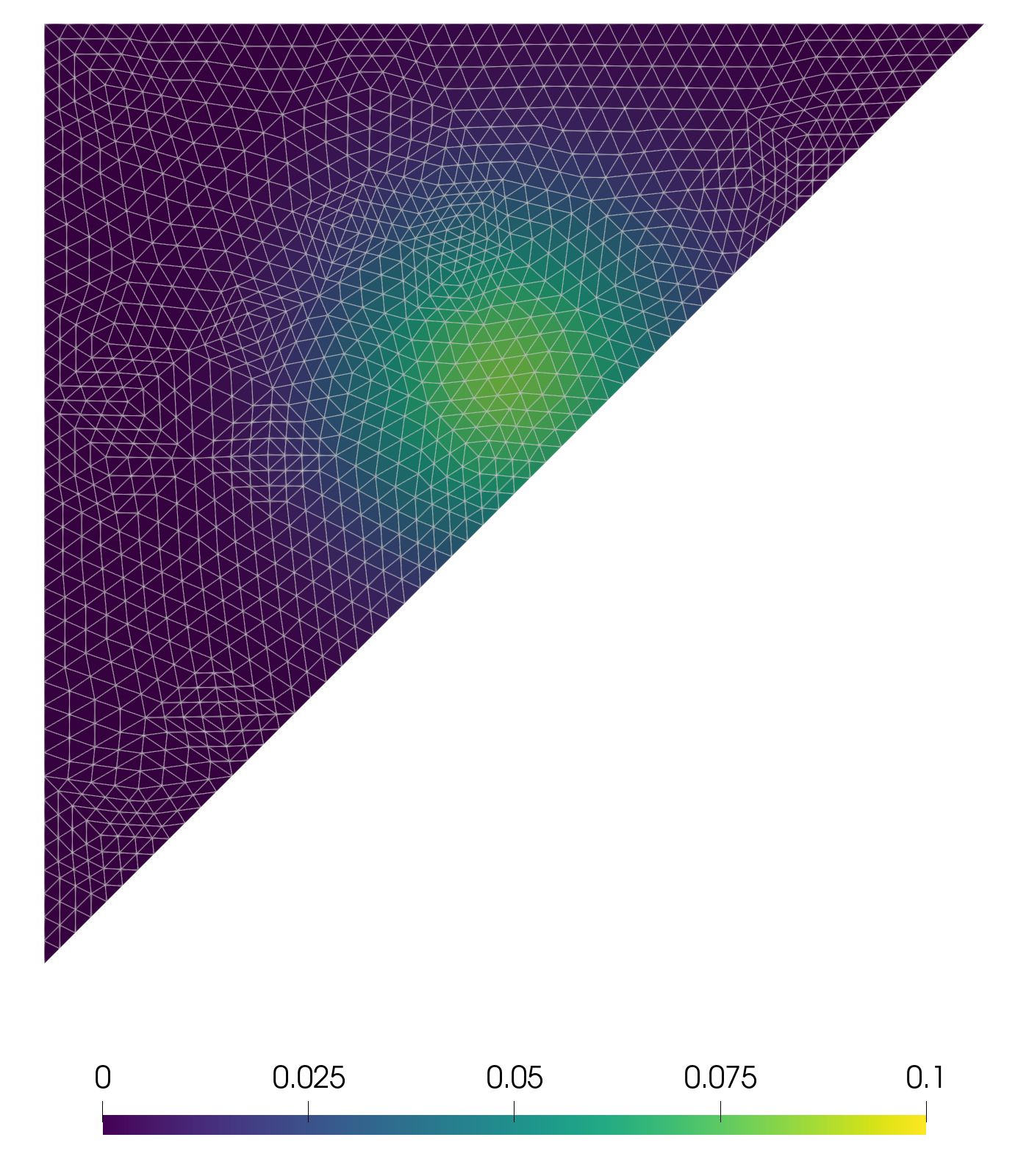}
        \caption{$t=0.4$}
        \label{fig:triangle-0p4}
    \end{subfigure}
    \caption{Spatial density $\rho_h(t,\bm x)$ on the coarsest mesh at times $t=0.1$, $0.2$, $0.3$, and $0.4$. The underlying unstructured triangular mesh is shown in each panel. The solution enters through the left boundary, is deflected downward by the electric field, and subsequently leaves the domain through the lower boundary.}
    \label{fig:triangle-evolution}
\end{figure}

The preceding experiments were designed to assess the theoretical results derived in this work. We conclude the numerical study with an additional mesh-refinement experiment that goes beyond the scope of our analysis. Its purpose is to investigate the practical behavior of the SAT method as the discretization is refined, in particular the approximation error and numerical ranks as the discretization is refined. Since a full forward Euler computation becomes prohibitively expensive on the finer meshes, we construct a reference approximation from the exact solution described above.

For this study, we use four refinement levels $\ell=0,\,1,\,2,\,3$, where the refinements are applied as described above. To estimate the $L_2$ error on the different meshes, we construct a low-rank reference approximation from the exact solution $\bar u$ using the same SVD-based procedure as for the inflow data. The resulting factors are interpolated onto the spatial and velocity meshes used for the simulation using piecewise-linear elements. We then use the difference between the numerical low-rank solution and this interpolated reference solution to estimate the $L_2$ error. The results are shown in Figure~\ref{fig:triangle-error-rank}.

In the left panel, we observe that the error initially increases as the solution enters and is transported through the domain. It reaches its largest values around $t=0.4$ and decreases again as the solution leaves the domain. The error decreases under mesh refinement, which is consistent with convergence under mesh refinement. The right panel of Figure~\ref{fig:triangle-error-rank} shows the evolution of the numerical rank for the four refinement levels. Starting from rank zero, the rank increases in several discrete steps as the solution enters and propagates through the domain. After approximately $t=0.3$, the rank remains constant for all four meshes. Because finer discretizations use smaller time steps, the truncation tolerances decrease under refinement, resulting in higher ranks. The maximum observed ranks are $3$, $6$, $9$, and $12$ for refinement levels $\ell=0$, $1$, $2$, and $3$, respectively. Thus, while the size of the full phase-space discretization grows from approximately $1.4\cdot 10^7$ to $5.8\cdot 10^{10}$ cells, the numerical rank increases only from $3$ to $12$.

\begin{figure}[t]
    \centering

\begin{minipage}[t]{.49\textwidth}
  \vspace{0pt}
  \centering
\pgfplotstableread{
time_700 error_700 time_701 error_701 time_702 error_702 time_703 error_703
0.0000000000000000e+00 nan 0.0000000000000000e+00 nan 0.0000000000000000e+00 nan 0.0000000000000000e+00 nan
2.0000000000000000e-02 3.8593656644290301e-05 2.0000000000000000e-02 3.6077994317922446e-05 2.0000000000000000e-02 3.5399976403135120e-05 2.0000000000000000e-02 1.9266860653434559e-05
4.0000000000000001e-02 5.0124341479011649e-04 4.0000000000000001e-02 1.7076817192395282e-04 4.0000000000000001e-02 6.8025609674018275e-05 4.0000000000000001e-02 4.0935568855213965e-05
5.9999999999999998e-02 7.2158887791482157e-04 5.9999999999999998e-02 3.7338286634802496e-04 5.9999999999999998e-02 2.0046201159686582e-04 5.9999999999999998e-02 9.3420867465826950e-05
8.0000000000000002e-02 1.4087992582463625e-03 8.0000000000000002e-02 7.7717418381039024e-04 8.0000000000000002e-02 3.6433885791113674e-04 8.0000000000000002e-02 1.8811075867590190e-04
1.0000000000000001e-01 2.3469640206090110e-03 1.0000000000000001e-01 1.1555172029045018e-03 1.0000000000000001e-01 5.9486935087819926e-04 1.0000000000000001e-01 3.0391346683828271e-04
1.2000000000000000e-01 3.2409082079179934e-03 1.2000000000000000e-01 1.6459174566766765e-03 1.2000000000000000e-01 8.6005353622236886e-04 1.2000000000000000e-01 4.3296746467623979e-04
1.4000000000000001e-01 4.1772815581516826e-03 1.4000000000000001e-01 2.2341643880633023e-03 1.4000000000000001e-01 1.1529281543823597e-03 1.4000000000000001e-01 5.8628106150536625e-04
1.6000000000000000e-01 5.3686063479034853e-03 1.6000000000000000e-01 2.9192038711771851e-03 1.6000000000000000e-01 1.4992445217174280e-03 1.6000000000000000e-01 7.7089675124830529e-04
1.7999999999999999e-01 6.7019025041560034e-03 1.7999999999999999e-01 3.6723501998329718e-03 1.7999999999999999e-01 1.9112437262641313e-03 1.7999999999999999e-01 9.8578466625482233e-04
2.0000000000000001e-01 8.2010926887330100e-03 2.0000000000000001e-01 4.4898616200184918e-03 2.0000000000000001e-01 2.3739987223097259e-03 2.0000000000000001e-01 1.2263326336459388e-03
2.2000000000000000e-01 9.7495387602251106e-03 2.2000000000000000e-01 5.3688067065471639e-03 2.2000000000000000e-01 2.8645527656410217e-03 2.2000000000000000e-01 1.4832510868211756e-03
2.3999999999999999e-01 1.1319521870880598e-02 2.3999999999999999e-01 6.2774187370590153e-03 2.3999999999999999e-01 3.3754570227670327e-03 2.3999999999999999e-01 1.7537381213400473e-03
2.6000000000000001e-01 1.2874720000366225e-02 2.6000000000000001e-01 7.2003387633411704e-03 2.6000000000000001e-01 3.8977283979396682e-03 2.6000000000000001e-01 2.0336050260975893e-03
2.8000000000000003e-01 1.4338560255524475e-02 2.8000000000000003e-01 8.1135812709736549e-03 2.8000000000000003e-01 4.4207310135637636e-03 2.8000000000000003e-01 2.3152952948584768e-03
2.9999999999999999e-01 1.5689370555676239e-02 2.9999999999999999e-01 8.9921150585259447e-03 2.9999999999999999e-01 4.9310688440461900e-03 2.9999999999999999e-01 2.5915498557987260e-03
3.2000000000000001e-01 1.6922589173751838e-02 3.2000000000000001e-01 9.8213575998058675e-03 3.2000000000000001e-01 5.4203818575867908e-03 3.2000000000000001e-01 2.8584822685888498e-03
3.4000000000000002e-01 1.8025256453676964e-02 3.4000000000000002e-01 1.0594558325985513e-02 3.4000000000000002e-01 5.8842124303205194e-03 3.4000000000000002e-01 3.1136015217356951e-03
3.5999999999999999e-01 1.8946664490887253e-02 3.5999999999999999e-01 1.1282332296979327e-02 3.5999999999999999e-01 6.3042251744316749e-03 3.5999999999999999e-01 3.3462868908191810e-03
3.8000000000000000e-01 1.9570362044566410e-02 3.8000000000000000e-01 1.1794101666751509e-02 3.8000000000000000e-01 6.6272674959974541e-03 3.8000000000000000e-01 3.5281530188978616e-03
4.0000000000000002e-01 1.9746999894885618e-02 4.0000000000000002e-01 1.1982476428291892e-02 4.0000000000000002e-01 6.7688323508300289e-03 4.0000000000000002e-01 3.6152365548516069e-03
4.1999999999999998e-01 1.9077245397040914e-02 4.1999999999999998e-01 1.1699106392328708e-02 4.1999999999999998e-01 6.6473139494532428e-03 4.1999999999999998e-01 3.5638382558456421e-03
4.4000000000000000e-01 1.7602411635750877e-02 4.4000000000000000e-01 1.0900084317224969e-02 4.4000000000000000e-01 6.2294349905678030e-03 4.4000000000000000e-01 3.3524395474725027e-03
4.6000000000000002e-01 1.5434576526470538e-02 4.6000000000000002e-01 9.6474443683691319e-03 4.6000000000000002e-01 5.5514001164629944e-03 4.6000000000000002e-01 2.9980385068476803e-03
4.7999999999999998e-01 1.2870922822854290e-02 4.7999999999999998e-01 8.1210004750773852e-03 4.7999999999999998e-01 4.7098293648811908e-03 4.7999999999999998e-01 2.5547794520636592e-03
5.0000000000000000e-01 1.0277953757984227e-02 5.0000000000000000e-01 6.5469404397827937e-03 5.0000000000000000e-01 3.8314384673420162e-03 5.0000000000000000e-01 2.0910525040298681e-03
}\triangleErrorData
\begin{tikzpicture}[scale=.95]
  \begin{semilogyaxis}[
    scale only axis,
    width=0.8\columnwidth,
    height=0.8\columnwidth,
    xmin=0,
    xmax=0.5,
    xlabel={Time $t$},
    ylabel={${}\approx \| u_h(t) - u(t) \|_{L_2}$},
    grid=major,
    legend pos=north west,
    legend cell align=left,
    unbounded coords=discard,
  ]
    \addplot[
      blue,
      thick,
      mark=square*,
      mark options={draw=blue, fill=blue},
    ] table[
      x=time_700,
      y=error_700,
    ] {\triangleErrorData};
    \addlegendentry{$\ell=0$}

    \addplot[
      red,
      thick,
      mark=*,
      mark options={draw=red, fill=red},
    ] table[
      x=time_701,
      y=error_701,
    ] {\triangleErrorData};
    \addlegendentry{$\ell=1$}

    \addplot[
      green!60!black,
      thick,
      mark=triangle*,
      mark options={draw=green!60!black, fill=green!60!black},
    ] table[
      x=time_702,
      y=error_702,
    ] {\triangleErrorData};
    \addlegendentry{$\ell=2$}

    \addplot[
      orange!85!black,
      thick,
      mark=diamond*,
      mark options={draw=orange!85!black, fill=orange!85!black},
    ] table[
      x=time_703,
      y=error_703,
    ] {\triangleErrorData};
    \addlegendentry{$\ell=3$}
  \end{semilogyaxis}
\end{tikzpicture}

\end{minipage}
\hfill
\begin{minipage}[t]{.49\textwidth}
  \vspace{0pt}
  \centering
\pgfplotstableread{
time_700 rank_700 time_701 rank_701 time_702 rank_702 time_703 rank_703
0.0000000000000000e+00 0 0.0000000000000000e+00 0 0.0000000000000000e+00 0 0.0000000000000000e+00 0
2.0000000000000000e-02 0 2.0000000000000000e-02 0 2.0000000000000000e-02 0 2.0000000000000000e-02 1
4.0000000000000001e-02 1 4.0000000000000001e-02 1 4.0000000000000001e-02 1 4.0000000000000001e-02 2
5.9999999999999998e-02 1 5.9999999999999998e-02 1 5.9999999999999998e-02 3 5.9999999999999998e-02 3
8.0000000000000002e-02 1 8.0000000000000002e-02 3 8.0000000000000002e-02 3 8.0000000000000002e-02 3
1.0000000000000001e-01 2 1.0000000000000001e-01 3 1.0000000000000001e-01 3 1.0000000000000001e-01 6
1.2000000000000000e-01 3 1.2000000000000000e-01 3 1.2000000000000000e-01 5 1.2000000000000000e-01 6
1.4000000000000001e-01 3 1.4000000000000001e-01 3 1.4000000000000001e-01 6 1.4000000000000001e-01 6
1.6000000000000000e-01 3 1.6000000000000000e-01 5 1.6000000000000000e-01 6 1.6000000000000000e-01 8
1.7999999999999999e-01 3 1.7999999999999999e-01 5 1.7999999999999999e-01 6 1.7999999999999999e-01 9
2.0000000000000001e-01 3 2.0000000000000001e-01 6 2.0000000000000001e-01 7 2.0000000000000001e-01 10
2.2000000000000000e-01 3 2.2000000000000000e-01 6 2.2000000000000000e-01 8 2.2000000000000000e-01 10
2.3999999999999999e-01 3 2.3999999999999999e-01 6 2.3999999999999999e-01 8 2.3999999999999999e-01 10
2.6000000000000001e-01 3 2.6000000000000001e-01 6 2.6000000000000001e-01 9 2.6000000000000001e-01 11
2.8000000000000003e-01 3 2.8000000000000003e-01 6 2.8000000000000003e-01 9 2.8000000000000003e-01 12
2.9999999999999999e-01 3 2.9999999999999999e-01 6 2.9999999999999999e-01 9 2.9999999999999999e-01 12
3.2000000000000001e-01 3 3.2000000000000001e-01 6 3.2000000000000001e-01 9 3.2000000000000001e-01 12
3.4000000000000002e-01 3 3.4000000000000002e-01 6 3.4000000000000002e-01 9 3.4000000000000002e-01 12
3.5999999999999999e-01 3 3.5999999999999999e-01 6 3.5999999999999999e-01 9 3.5999999999999999e-01 12
3.8000000000000000e-01 3 3.8000000000000000e-01 6 3.8000000000000000e-01 9 3.8000000000000000e-01 12
4.0000000000000002e-01 3 4.0000000000000002e-01 6 4.0000000000000002e-01 9 4.0000000000000002e-01 12
4.1999999999999998e-01 3 4.1999999999999998e-01 6 4.1999999999999998e-01 9 4.1999999999999998e-01 12
4.4000000000000000e-01 3 4.4000000000000000e-01 6 4.4000000000000000e-01 9 4.4000000000000000e-01 12
4.6000000000000002e-01 3 4.6000000000000002e-01 6 4.6000000000000002e-01 9 4.6000000000000002e-01 12
4.7999999999999998e-01 3 4.7999999999999998e-01 6 4.7999999999999998e-01 9 4.7999999999999998e-01 12
5.0000000000000000e-01 3 5.0000000000000000e-01 6 5.0000000000000000e-01 9 5.0000000000000000e-01 12
}\triangleRankData
\begin{tikzpicture}[scale=.95]
  \begin{axis}[
    scale only axis,
    width=0.8\columnwidth,
    height=0.8\columnwidth,
    xmin=0,
    xmax=0.5,
    ymin=0,
    xlabel={Time $t$},
    ylabel={Rank},
    grid=major,
    legend pos=north west,
    legend cell align=left,
  ]
    \addplot[
      blue,
      thick,
      const plot,
      mark=square*,
      mark options={draw=blue, fill=blue},
    ] table[
      x=time_700,
      y=rank_700,
    ] {\triangleRankData};
    \addlegendentry{$\ell=0$}

    \addplot[
      red,
      thick,
      const plot,
      mark=*,
      mark options={draw=red, fill=red},
    ] table[
      x=time_701,
      y=rank_701,
    ] {\triangleRankData};
    \addlegendentry{$\ell=1$}

    \addplot[
      green!60!black,
      thick,
      const plot,
      mark=triangle*,
      mark options={draw=green!60!black, fill=green!60!black},
    ] table[
      x=time_702,
      y=rank_702,
    ] {\triangleRankData};
    \addlegendentry{$\ell=2$}

    \addplot[
      orange!85!black,
      thick,
      const plot,
      mark=diamond*,
      mark options={draw=orange!85!black, fill=orange!85!black},
    ] table[
      x=time_703,
      y=rank_703,
    ] {\triangleRankData};
    \addlegendentry{$\ell=3$}
  \end{axis}
\end{tikzpicture}

\end{minipage}
\caption{Simulation results for the 2d2v experiment on the triangular spatial domain. The refinement level $\ell$ denotes the number of uniform refinements relative to the coarsest mesh. Left: Estimated $L_2$ error as a function of time. Right: Numerical rank as a function of time.}
    \label{fig:triangle-error-rank}
\end{figure}

The results of the mesh-refinement simulations are summarized in Table~\ref{tab:triangle-results}. The reported errors are the maximum estimated $L_2$ errors over the 25 equally spaced checkpoints shown in Figure~\ref{fig:triangle-error-rank}.

\begin{table}[t]
    \centering
    \caption{Discretization parameters and results for the 2d2v mesh-refinement study. Here, $n=n^{(x)}n^{(v)}$ denotes the total number of phase-space cells, and $r_{\mathrm{max}}$ gives the maximal numerical rank. The estimated $L_2$ error is the maximum over the 25 equally spaced checkpoints.}
    \label{tab:triangle-results}
\begingroup
\small
\setlength{\tabcolsep}{3pt}
\sisetup{
  detect-all,
  group-minimum-digits=4,
  group-separator={\,},
  output-exponent-marker=\ensuremath{\mathrm{E}},
}
\begin{tabular}{@{}
  c
  S[table-format=1.2e-1]
  S[table-format=6.0]
  S[table-format=6.0]
  S[table-format=1.2e2]
  S[table-format=1.2e-1]
  S[table-format=1.2e-1]
  S[table-format=2.0]
  S[table-format=-1.1e-1]
@{}}
  \toprule
  Level $\ell$
  & {$\tau$}
  & {$n^{(x)}$}
  & {$n^{(v)}$}
  & {$n$}
  & {$\varepsilon_1$}
  & {$\varepsilon_2$}
  & {$r_{\mathrm{max}}$}
  & {$\max_n \|u_h^n-u(t^n)\|_{L_2(\Omega)}$} \\
  \midrule
  0 & 1.00e-3 & 2320 & 6144 & 1.43e7 & 1.00e-1 & 1.00e-4 & 3 & 2.0e-2 \\
  1 & 5.00e-4 & 9280 & 24576 & 2.28e8 & 5.00e-2 & 2.50e-5 & 6 & 1.2e-2 \\
  2 & 2.50e-4 & 37120 & 98304 & 3.65e9 & 2.50e-2 & 6.25e-6 & 9 & 6.8e-3 \\
  3 & 1.25e-4 & 148480 & 393216 & 5.84e10 & 1.25e-2 & 1.56e-6 & 12 & 3.6e-3 \\
  \bottomrule
\end{tabular}
\endgroup

\end{table}

Overall, the refinement study indicates that the SAT method remains effective on substantially larger discretizations, with decreasing approximation errors and only moderate growth of the numerical ranks.

\section{Conclusion}\label{sec:conclusion}

We developed an adaptive low-rank SAT method for FV discretizations of the Vlasov equation. The scheme accommodates bounded domains, inflow boundary conditions, and unstructured meshes. We proved that the method inherits the $L_2$ stability properties of the underlying FV scheme and derived error bounds in terms of the truncation tolerances. Our numerical experiments confirmed the theoretical predictions in both 1d1v and 2d2v settings. They further demonstrate that high-dimensional problems can be treated at low numerical rank in regimes where the corresponding full forward Euler discretization would be prohibitively expensive. Taken together, these results indicate that the SAT approach can retain the stability properties of the underlying discretization while controlling the additional error introduced by low-rank truncation and substantially reducing the effective complexity of high-dimensional transport problems.

Several open questions remain for future work. The present analysis does not address conservation or positivity preservation. In addition, higher-order spatial and temporal discretizations would be desirable. The hard SVD truncation could be replaced by more efficient low-rank approximation methods, including suitable randomized algorithms, provided that they are nonexpansive in the Frobenius norm. Finally, while we focused on the linear Vlasov equation, extending the approach to the nonlinear Vlasov--Poisson system is of considerable interest.

\appendix
\section{Stability of the fully discrete forward Euler scheme}\label{appendix}

We prove the stability of the fully discrete forward Euler scheme in Definition~\ref{def:euler_fv} for a more general class of divergence-free linear transport equations. Consider for $T>0$ the hyperbolic problem
\[
    u_t + \nabla_{\bm z} \cdot (\bm a(t,\bm z) u) = 0\quad
    \textnormal{on } \quad (0,T)\times \Omega
\]
with a bounded polyhedral domain $\Omega \subset \mathbb R^{d}$. We prescribe homogeneous inflow boundary conditions. Assume that $\bm a \in C([0,T], W^{1,\infty}(\Omega,\mathbb R^{d}))$ is divergence-free that is,~$\nabla_{\bm z} \cdot \bm a(t,\cdot) = 0$ for all $t\in[0,T]$.

Let $\mathcal T$ be a polyhedral mesh of $\Omega$ and consider the FV discretization with space $\mathcal V_h$. The fully discrete forward Euler scheme with step size $\tau>0$ generates a sequence of FV functions $(\hat u_h^n) \subset \mathcal V_h$ for times $t^n = n\tau$, $n \in \mathbb N$. On the single cell $K \in \mathcal T$, the fully discrete forward Euler scheme can be written as
\[
    \abs{K} \left( \frac{\hat u^{n+1}_K - \hat u^n_K}{\tau}\right) 
        + \sum_{e\subset \partial K} \int_e  \hat f_e(t^n, \bm z,(\hat u_h^n)^-, (\hat u_h^n)^+, \bm n_{K,e}) \,\mathrm ds =  0,
\]
where $\hat u_K^n$ denotes the constant value of $\hat u_h^n \in \mathcal V_h$ on the cell $K$. Using the numerical fluxes $\hat f_e$ as described in Section~\ref{sec:FV} this leads to the equation
\begin{equation}
    \label{eq: update function value}
    \hat u_K^{n+1} = \hat u_K^{n} - \frac{\tau}{\abs{K}} \sum_{e\subset \partial K} \abs{e} \left( a_{K,e}^n \{\hat u_h^n\}_{e} + \frac{b_{K,e}^n}{2} [\hat u_h^n]_{e} \right),
\end{equation}
where
\[
    a_{K,e}^n = \frac{1}{\abs{e}} \int_e \bm n_{K,e} \cdot \bm a(t^n,\cdot)  \,\mathrm ds, \qquad b_{K,e}^n = \frac{1}{\abs{e}} \int_e \alpha_e(t^n, \cdot)  \,\mathrm ds.
\]
Recall the different choices~\eqref{eq:upwind_flux} and~\eqref{eq:lf_flux} of $\alpha_e$ for the upwind and Lax--Friedrichs flux, respectively. We assume that the parameters $\alpha_e$ are uniformly bounded over all faces and $t\in[0,T]$. The upwind flux is always used for boundary faces.

\begin{lemma} \label{lemma:stability_fe}
    In the setting described above, define
    \begin{equation} \label{eq:CFL}
        \tau_{\mathrm{CFL}}:= \Bigl(
            \sup_{\substack{K\in\mathcal T\\ t\in[0,T]}}
                \frac{1}{2|K|}\sum_{e\subset\partial K} \int_e \alpha_e(t,\bm z)\,\mathrm ds
        \Bigr)^{-1},
    \end{equation}
    with the convention that \(\tau_{\mathrm{CFL}}=\infty\) if the supremum vanishes. Then $\tau_{\mathrm{CFL}}\in(0,\infty]$. For every initial value $\hat u_h^0\in\mathcal V_h$ and for every step size $\tau$ with $0<\tau \le \tau_{\mathrm{CFL}}$, the fully discrete forward Euler scheme is stable in the sense that the generated sequence of FV functions $(\hat u_h^n) \subset \mathcal V_h$ satisfies
    \[
        \| \hat u_h^n \|_{L_2(\Omega)} \le \| \hat u_h^0 \|_{L_2(\Omega)}
    \]
    for all $n \in \mathbb N$ with $t^n \le T$.
\end{lemma}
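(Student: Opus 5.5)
The plan is to show that one forward Euler step does not increase $\sum_K |K|(\hat u_K^n)^2=\|\hat u_h^n\|_{L_2(\Omega)}^2$, and then to induct over $n$. I will rewrite the update \eqref{eq: update function value} as a convex combination of neighbouring cell values, apply Jensen's inequality cellwise, and use the divergence-free property to control the bookkeeping of the face terms.

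First, positivity of $\tau_{\mathrm{CFL}}$ is immediate. The mesh is finite and the $\alpha_e$ are uniformly bounded, so the supremum in \eqref{eq:CFL} is finite.

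Next I rewrite the update. Fix $K$, and for a face $e\subset\partial K$ let $u_L$ denote the neighbouring value. On boundary faces the exterior trace is the prescribed $g=0$, so there $u_L=0$. With the normal $\bm n_{K,e}$ we have $\{u\}_e=(u_K+u_L)/2$ and $[u]_e=u_K-u_L$. Since $\nabla_{\bm z}\cdot\bm a=0$, the divergence theorem gives $\sum_{e\subset\partial K}|e|\,a^n_{K,e}=\int_{\partial K}\bm n\cdot\bm a\,\mathrm ds=0$, with boundary faces included. Subtracting $u_K$ times this zero sum turns the flux sum into
\[
\sum_{e\subset\partial K}|e|\,c_{K,e}\,(u_K-u_L),\qquad c_{K,e}:=\tfrac12\bigl(b^n_{K,e}-a^n_{K,e}\bigr).
\]
For upwind, $b$ is the face mean of $|\bm n\cdot\bm a|$ and hence at least $|a|$. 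For Lax--Friedrichs, $\alpha_e$ dominates the maximum. In both cases $c_{K,e}\ge0$. Moreover $\sum_e|e|c_{K,e}=\tfrac12\sum_e|e|b_{K,e}$, again because the $a$-terms sum to zero. The update then reads
\[
u_K^{n+1}=(1-\lambda_K)u_K+\frac{\tau}{|K|}\sum_{e}|e|\,c_{K,e}\,u_L,\qquad \lambda_K=\frac{\tau}{2|K|}\sum_e|e|\,b_{K,e}.
\]
The condition $\tau\le\tau_{\mathrm{CFL}}$ gives exactly $\lambda_K\le1$, so the right-hand side is a convex combination. Jensen's inequality then yields $(u_K^{n+1})^2\le(1-\lambda_K)u_K^2+\frac{\tau}{|K|}\sum_e|e|c_{K,e}u_L^2$.

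Now multiply by $|K|$ and sum over $K$. This step is the main bookkeeping obstacle: the "incoming" terms $c_{K,e}u_L^2$ must be redistributed to the cell $L$. On an interior face, $b$ is the same from both sides and $a_{K,e}=-a_{L,e}$, so $c_{K,e}=c_{L,e}+a_{L,e}$. Boundary faces contribute nothing incoming, since $u_L=0$ there. After the swap, the coefficient of $u_K^2$ becomes
\[
|K|-\tau\Bigl[\sum_{e\ \mathrm{bdry}}|e|c_{K,e}-\sum_{e\ \mathrm{int}}|e|a_{K,e}\Bigr]
=|K|-\tau\sum_{e\ \mathrm{bdry}}\tfrac{|e|}{2}\bigl(b_{K,e}+a_{K,e}\bigr),
\]
where I have used $\sum_e|e|a_{K,e}=0$ once more. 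Because the upwind flux is used on boundary faces, $b\ge|a|$ there, so the bracket is nonnegative. Hence $\sum_K|K|(u_K^{n+1})^2\le\sum_K|K|(u_K^n)^2$, and induction over $n$ gives the claim.
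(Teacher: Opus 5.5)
Your proof is correct and follows essentially the same route as the paper. You write the update as a convex combination whose weights, after using $\sum_{e}|e|a_{K,e}^n=0$, coincide with the paper's $p_{KK}^n$ and $p_{KL}^n$ (your $1-\lambda_K$ and $\tfrac{\tau}{|K|}|e|c_{K,e}$), apply Jensen cellwise, swap the neighbour terms using $a_{L,e}^n=-a_{K,e}^n$ and $b_{L,e}^n=b_{K,e}^n$, and bound the resulting coefficient $|K|-\tau\sum_{e\subset\partial K\cap\partial\Omega}|e|\tfrac{a_{K,e}^n+b_{K,e}^n}{2}$ by $|K|$ via the boundary upwind flux. The only cosmetic difference is that you subtract the vanishing divergence sum before forming the weights, whereas the paper keeps the $a$-terms in $p_{KK}^n$ and uses divergence-freeness afterwards to show $p_{KK}^n\ge 0$ and that the weights sum to one.
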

The proof is based on standard techniques in the analysis of FV schemes for hyperbolic conservation laws, see for example~\cite{eymard2000}. We provide a self-contained proof for completeness.
\begin{proof}
    By the uniform boundedness of the flux parameters, there exists $\bar\alpha>0$ such that
    \[
        0\le \alpha_e(t,\bm z)\le \bar\alpha
    \]
    for all faces \(e\), \(t\in[0,T]\), and \(\bm z\in e\). Therefore,
    \[
        \sup_{\substack{K\in\mathcal T\\t\in[0,T]}}
        \frac{1}{2|K|}  \sum_{e\subset\partial K} \int_e\alpha_e(t,\bm z)\,\mathrm ds
            \le  \bar\alpha \max_{K\in\mathcal T} \frac{\sum_{e\subset\partial K}|e|}{2|K|}.
    \]
    Since the mesh is finite and $\abs{K}>0$ for every $K\in\mathcal T$, the right-hand side is finite and hence the supremum in~\eqref{eq:CFL} is finite. Its reciprocal is therefore positive whenever the supremum is nonzero. If the supremum vanishes, then $\tau_{\mathrm{CFL}}=\infty$ by convention. Hence, $\tau_{\mathrm{CFL}}\in(0,\infty]$.

    Let $\mathcal N(K)$ denote the set of cells neighboring $K$. For convenience in subsequent formulas, we associate to every $e \in \partial K \cap \partial \Omega$ a ghost cell and denote the corresponding set of ghost cells as~$\hat{\mathcal N}(K)$. The augmented neighborhood of $K$ is then defined as $\mathcal N^*(K) = \mathcal N(K) \cup \hat{\mathcal N}(K)$. For every $L \in \hat{\mathcal N}(K)$, set $\hat u_L^n= 0$. We will show that for sufficiently small time steps, $\hat u_K^{n+1}$ is a convex combination of $\hat u_K^n$ and the values of~$\hat u_L^n$ in the augmented neighborhood. In fact, the update~\eqref{eq: update function value} can be rewritten as
    \begin{equation}\label{eq: convex combination}
        \hat u_K^{n+1} 
            =\underbrace{ \left[ 1-\frac{\tau}{\abs{K}} \sum_{e\subset \partial K} \abs{e} \left(\frac{a_{K,e}^n + b_{K,e}^n}{2} \right) \right]}_{{}=p_{KK}^n} \hat u_K^n
            + \sum_{L \in \mathcal N^*(K)} \underbrace{\frac{\tau}{\abs{K}}\left[\abs{e} \left(\frac{b_{K,e}^n - a_{K,e}^n}{2}\right) \right]}_{{}=p_{KL}^n} \hat u_L^n,
    \end{equation}
    where, for each $L\in\mathcal N^*(K)$, $e$ denotes the face of $K$ associated with $L$. First observe that $p_{KL}^n \ge 0$ since for upwind flux we have
    \[
        \abs{ a_{K,e}^n } = \left\lvert \frac{1}{\abs{e}} \int_e \bm n_{K,e} \cdot \bm a(t^n,\cdot)  \,\mathrm ds \right\rvert
        \le \frac{1}{\abs{e}} \int_e \abs{ \bm n_{K,e} \cdot \bm a(t^n,\cdot)}  \,\mathrm ds = b_{K,e}^n,
    \]
    whereas for Lax--Friedrichs flux
    \[
        \abs{a_{K,e}^n}\le \max_{\bm z\in e} \abs{\bm n_{K,e}\cdot\bm a(t^n,\bm z)}\le \alpha_e(t^n) = b_{K,e}^n.
    \]
    In both cases, $b_{K,e}^n - a_{K,e}^n \ge 0$ and hence $p_{KL}^n \ge 0$. In order to show that $p_{KK}^n \ge 0$, first note that
    \begin{equation} \label{eq:an_divergence_free}
        \sum_ {e \subset \partial K} \abs{e} a_ {K,e}^n = \sum_{e\subset \partial K} \int_e \bm n_{K,e} \cdot \bm a(t^n,\cdot)  \,\mathrm ds
        = \int_K \nabla \cdot \bm a(t^n,\cdot)  \,\mathrm d\bm z = 0,
    \end{equation}
    as $\bm a$ is divergence-free. Therefore, we have
    \[
        \frac{\tau}{|K|} \sum_{e\subset\partial K}|e|\frac{a_{K,e}^n+b_{K,e}^n}{2} 
            = \frac{\tau}{|K|} \sum_{e\subset\partial K}|e|\frac{b_{K,e}^n}{2} \le 1 
    \]
    for all $K\in\mathcal T$, all $t^n\le T$. The final inequality follows from the definition of $\tau_{\mathrm{CFL}}$ and the condition $0<\tau\le\tau_{\mathrm{CFL}}$.  Consequently, $p_{KK}^n\ge0$.
    
    Finally, it holds that
    \begin{align*}
            p_{KK}^n + \sum_{L \in \mathcal N^*(K)} p_{KL}^n & = 1 - \frac{\tau}{\abs{K}} \sum_{e\subset \partial K} \abs{e} \left(\frac{a_{K,e}^n + b_{K,e}^n}{2} \right) + \frac{\tau}{\abs{K}}\sum_{e\subset \partial K} \abs{e} \left(\frac{b_{K,e}^n - a_{K,e}^n}{2}\right) \\
            &= 1 - \frac{\tau}{\abs{K}} \sum_{e\subset \partial K} \abs{e} a_{K,e}^n = 1,
    \end{align*}
    where we used \eqref{eq:an_divergence_free} in the last step.

    We have thus shown that~\eqref{eq: convex combination} represents a convex combination. By applying Jensen's inequality and using $\hat u_L^n=0$ for ghost cells, we obtain
    \[
        \abs{\hat u_K^{n+1}}^2 \le p_{KK}^n \abs{\hat u_K^n}^2 + \sum_{L \in \mathcal N^*(K)} p_{KL}^n \abs{\hat u_L^n}^2
        = p_{KK}^n \abs{\hat u_K^n}^2 + \sum_{L \in \mathcal N(K)} p_{KL}^n \abs{\hat u_L^n}^2.
    \]
    Multiplying by $\abs{K}$ and summing over all cells $K$ gives
    \begin{align}
        \sum_{K \in \mathcal T} \abs{K} |\hat u_K^{n+1}|^2 &\le \sum_{K \in \mathcal T} \abs{K} p_{KK}^n \abs{\hat u_K^n}^2 + \sum_{K \in \mathcal T} \abs{K} \sum_{L \in \mathcal N(K)} p_{KL}^n \abs{\hat u_L^n}^2 \notag \\
        &= \sum_{K \in \mathcal T} \big(\abs{K} p_{KK}^n + \sum_{L \in \mathcal N(K)} \abs{L} p_{LK}^n \big) \abs{\hat u_K^n}^2.\label{eq: L2 estimate}
    \end{align}
    It remains to show that the coefficient in parentheses in~\eqref{eq: L2 estimate} is bounded by $\abs{K}$. For $e = \partial K \cap \partial L$ with $L \in \mathcal N(K)$ we have
    \[
        \abs{L} p_{LK}^n = \tau \abs{e} \left(\frac{b_{L,e}^n - a_{L,e}^n}{2} \right) = \tau \abs{e} \left(\frac{b_{K,e}^n + a_{K,e}^n}{2}\right)
    \]
    since the definition of the numerical fluxes implies $a_{L,e}^n = - a_{K,e}^n$ and $b_{L,e}^n = b_{K,e}^n$ for every interior face shared by $K$ and $L$. Inserting this into the previous equation gives
    \[
        \abs{K} p_{KK}^n + \sum_{L \in \mathcal N(K)} \abs{L} p_{LK}^n
            = \abs{K} - \tau \sum_{e \subset \partial K \cap \partial \Omega} \abs{e} \frac{a_{K,e}^n + b_{K,e}^n}{2}
            \le \abs{K},
    \]
    where the equality follows from the definitions of $p_{KK}^n$ and $p_{LK}^n$ in~\eqref{eq: convex combination}, and the inequality follows from the use of the upwind flux on boundary faces, for which $\abs{a_{K,e}^n} \le b_{K,e}^n$, and thus $a_{K,e}^n+b_{K,e}^n\ge 0$.

    As a result,~\eqref{eq: L2 estimate} yields
    \[
        \| \hat u_h^{n+1}\|_{L_2(\Omega)}^2 = \sum_{K \in \mathcal T} \abs{K} \lvert \hat u_K^{n+1} \rvert^2 \le \sum_{K \in \mathcal T} \abs{K} \abs{\hat u_K^n }^2 = \| \hat u_h^n\|_{L_2(\Omega)}^2,
    \]
    which concludes the proof of Lemma~\ref{lemma:stability_fe}.
\end{proof}

\paragraph*{Acknowledgements}
The work of A.U.~was supported by the Deutsche Forschungsgemeinschaft (DFG, German Research Foundation) – Projektnummer 506561557.



\small
\begin{thebibliography}{10}

\bibitem{mfem2024}
J.~Andrej, N.~Atallah, J.-P. B{\"a}cker, J.-S. Camier, D.~Copeland, V.~Dobrev,
  Y.~Dudouit, T.~Duswald, B.~Keith, D.~Kim, T.~Kolev, B.~Lazarov, K.~Mittal,
  W.~Pazner, S.~Petrides, S.~Shiraiwa, M.~Stowell, and V.~Tomov.
\newblock High-performance finite elements with {{MFEM}}.
\newblock {\em Int. J. High Perform. Comput. Appl.}, 38(5):447--467, 2024.

\bibitem{appelo2025}
D.~Appel\"o and Y.~Cheng.
\newblock Robust implicit adaptive low rank time-stepping methods for matrix
  differential equations.
\newblock {\em J. Sci. Comput.}, 102(3):Paper No. 81, 21, 2025.

\bibitem{bouche2005}
D.~Bouche, J.-M. Ghidaglia, and F.~Pascal.
\newblock Error estimate and the geometric corrector for the upwind finite
  volume method applied to the linear advection equation.
\newblock {\em SIAM J. Numer. Anal.}, 43(2):578--603, 2005.

\bibitem{boyer2012}
F.~Boyer.
\newblock Analysis of the upwind finite volume method for general initial- and
  boundary-value transport problems.
\newblock {\em IMA J. Numer. Anal.}, 32(4):1404--1439, 2012.

\bibitem{ceruti2022}
G.~Ceruti, J.~Kusch, and C.~Lubich.
\newblock A rank-adaptive robust integrator for dynamical low-rank
  approximation.
\newblock {\em BIT}, 62(4):1149--1174, 2022.

\bibitem{dolbeault2002}
J.~Dolbeault.
\newblock An introduction to kinetic equations: the {V}lasov-{P}oisson system
  and the {B}oltzmann equation.
\newblock {\em Discrete Contin. Dyn. Syst.}, 8(2):361--380, 2002.

\bibitem{einkemmer2021}
L.~Einkemmer and I.~Joseph.
\newblock A mass, momentum, and energy conservative dynamical low-rank scheme
  for the {V}lasov equation.
\newblock {\em J. Comput. Phys.}, 443:Paper No. 110495, 16, 2021.

\bibitem{einkemmer2024review}
L.~Einkemmer, K.~Kormann, J.~Kusch, R.~G. McClarren, and J.-M. Qiu.
\newblock A review of low-rank methods for time-dependent kinetic simulations.
\newblock {\em J. Comput. Phys.}, 538:Paper No. 114191, 27, 2025.

\bibitem{einkemmer2018a}
L.~Einkemmer and C.~Lubich.
\newblock A low-rank projector-splitting integrator for the {V}lasov-{P}oisson
  equation.
\newblock {\em SIAM J. Sci. Comput.}, 40(5):B1330--B1360, 2018.

\bibitem{einkemmer2019}
L.~Einkemmer and C.~Lubich.
\newblock A quasi-conservative dynamical low-rank algorithm for the {V}lasov
  equation.
\newblock {\em SIAM J. Sci. Comput.}, 41(5):B1061--B1081, 2019.

\bibitem{einkemmer2023}
L.~Einkemmer, A.~Ostermann, and C.~Scalone.
\newblock A robust and conservative dynamical low-rank algorithm.
\newblock {\em J. Comput. Phys.}, 484:Paper No. 112060, 20, 2023.

\bibitem{eymard2000}
R.~Eymard, T.~Gallou\"et, and R.~Herbin.
\newblock Finite volume methods.
\newblock In {\em Handbook of numerical analysis, {V}ol. {VII}}, volume VII of
  {\em Handb. Numer. Anal.}, pages 713--1020. North-Holland, Amsterdam, 2000.

\bibitem{galindo-olarte2026}
A.~{Galindo-Olarte}, J.~Nakao, M.~Pasha, J.-M. Qiu, and W.~Taitano.
\newblock A nodal discontinuous {Galerkin} method with rank-adaptive velocity
  space representation for the multiscale {BGK} model.
\newblock {\em arXiv:2508.16564}, 2025.

\bibitem{guo2024}
W.~Guo, J.~F. Ema, and J.-M. Qiu.
\newblock A local macroscopic conservative ({L}o{M}a{C}) low rank tensor method
  with the discontinuous {G}alerkin method for the {V}lasov dynamics.
\newblock {\em Commun. Appl. Math. Comput.}, 6(1):550--575, 2024.

\bibitem{guo2022a}
W.~Guo and J.-M. Qiu.
\newblock A low rank tensor representation of linear transport and nonlinear
  {V}lasov solutions and their associated flow maps.
\newblock {\em J. Comput. Phys.}, 458:Paper No. 111089, 24, 2022.

\bibitem{guo2024a}
W.~Guo and J.-M. Qiu.
\newblock A conservative low rank tensor method for the {V}lasov dynamics.
\newblock {\em SIAM J. Sci. Comput.}, 46(1):A232--A263, 2024.

\bibitem{guo2024b}
W.~Guo and J.-M. Qiu.
\newblock A local macroscopic conservative ({L}o{M}a{C}) low rank tensor method
  for the {V}lasov dynamics.
\newblock {\em J. Sci. Comput.}, 101(3):Paper No. 61, 24, 2024.

\bibitem{halko2011}
N.~Halko, P.~G. Martinsson, and J.~A. Tropp.
\newblock Finding structure with randomness: probabilistic algorithms for
  constructing approximate matrix decompositions.
\newblock {\em SIAM Rev.}, 53(2):217--288, 2011.

\bibitem{kieri2019}
E.~Kieri and B.~Vandereycken.
\newblock Projection methods for dynamical low-rank approximation of
  high-dimensional problems.
\newblock {\em Comput. Methods Appl. Math.}, 19(1):73--92, 2019.

\bibitem{kormann2026}
K.~Kormann, M.~Nazarov, and J.~Wen.
\newblock Positivity-preserving dynamical low-rank methods for the {Vlasov}
  equation.
\newblock {\em arXiv:2606.31662}, 2026.

\bibitem{kusch2023}
J.~Kusch, L.~Einkemmer, and G.~Ceruti.
\newblock On the stability of robust dynamical low-rank approximations for
  hyperbolic problems.
\newblock {\em SIAM J. Sci. Comput.}, 45(1):A1--A24, 2023.

\bibitem{leveque2002}
R.~J. LeVeque.
\newblock {\em Finite volume methods for hyperbolic problems}.
\newblock Cambridge University Press, Cambridge, 2002.

\bibitem{merlet2008}
B.~Merlet.
\newblock {$L^\infty$}- and {$L^2$}-error estimates for a finite volume
  approximation of linear advection.
\newblock {\em SIAM J. Numer. Anal.}, 46(1):124--150, 2007/08.

\bibitem{uschmajew2024}
A.~Uschmajew and A.~Zeiser.
\newblock Dynamical low-rank approximation of the {V}lasov-{P}oisson equation
  with piecewise linear spatial boundary.
\newblock {\em BIT}, 64(2):Paper No. 19, 26, 2024.

\bibitem{uschmajew2025}
A.~Uschmajew and A.~Zeiser.
\newblock Discontinuous {G}alerkin discretization of conservative dynamical
  low-rank approximation schemes for the {V}lasov-{P}oisson equation.
\newblock {\em BIT}, 65(4):Paper No. 43, 38, 2025.

\end{thebibliography}
\end{document}